\newtheorem{thm}{Theorem}[section]
\newtheorem{cor}[thm]{Corollary}
\newtheorem{prop}[thm]{Proposition}
\newtheorem{lem}[thm]{Lemma}
\theoremstyle{definition}
\newtheorem{defi}[thm]{Definition}
\DeclareMathOperator{\Spec}{Spec}
\def\R{\mathbbm{R}}
\def\N{\mathbbm{N}}
\def\a{\alpha}
\def\y{\lambda}
\def\D{\Delta}
\def\ol{\overline}
\title{Hoffman colorings of graphs}
\author{Aida Abiad\thanks{\texttt{a.abiad.monge@tue.nl},  Department of Mathematics and Computer Science, Eindhoven University of Technology, The Netherlands}\thanks{Department of Mathematics and Data Science of Vrije Universiteit Brussels, Belgium}\qquad Wieb Bosma\thanks{\texttt{bosma@math.ru.nl}, Department of Mathematics, Radboud Universiteit Nijmegen, The Netherlands}  \qquad Thijs van Veluw\thanks{\texttt{thijs.vanveluw@ugent.be}, Department of Mathematics, Computer Science and Statistics, Ghent University, Belgium}}
\date{}
\begin{document}

\maketitle

%%%%%%%%%%%%%%%%%%%%%%%%%%%%%%%%%%%%%%%

\begin{abstract}
Hoffman's bound is a well-known spectral bound on the chromatic number of a graph, known to be tight for instance for bipartite graphs. While Hoffman colorings (colorings attaining the bound) were studied before for regular graphs, for general graphs not much is known. 
We investigate tightness of the Hoffman bound, with a particular focus on irregular graphs, obtaining several results on the graph structure of Hoffman colorings. In particular, we prove a Decomposition Theorem, which characterizes the structure of Hoffman colorings, and we use it to completely classify Hoffman colorability of cone graphs and line graphs. We also prove a partial converse, the Composition Theorem, leading to an algorithm for computing all connected Hoffman colorable graphs for some given number of vertices and colors. Since several graph coloring parameters are known to be sandwiched between the Hoffman bound and the chromatic number, as a byproduct of our results, we obtain the values of these chromatic parameters.\\

\noindent \textbf{Keywords:} chromatic number, adjacency matrix, eigenvalues, Hoffman coloring 

\end{abstract}

%%%%%%%%%%%%%%%%%%%%%%%%%%%%%%%%%%%%%%%

%%%%%%%%%%%%%%%%%%%%%%%%%%%%%%%%%%%%%%%%%%%%%%%%%%%%%%%%%%%%%%%%%%%%%%%%%%%%%%%%%%%%%%%%%%%%%%%%%%%%%%%%%%%%%

%%%%%%%%%%%%%%%%%%%%%%%%%%%%%%%%%%%%%%%%%%%%%%%%%%%%%%%%%%%%%%%%%%%%%%%%%%%%%%%%%%%%%%%%%%%%%%%%%%%%%%%%%%%%%
\section{Introduction}\label{sec:intro}
%%%%%%%%%%%%%%%%%%%%%%%%%%%%%%%%%%%%%%%%%%%%%%%%%%%%%%%%%%%%%%%%%%%%%%%%%%%%%%%%%%%%%%%%%%%%%%%%%%%%%%%%%%%%%

Consider a simple undirected graph $G=(V(G),E(G))$ of order $n$. The adjacency matrix $A$ of
a graph $G$ is the $n\times n$ matrix whose rows and columns are indexed by the vertices
of $G$, with entries satisfying $A_{u,v} = 1$ if $u\sim_G v$ (that is, if $\{u, v\}\in E(G)$) and $A_{u,v}=0$ otherwise.
This matrix is real and symmetric, and so its eigenvalues are real, and can be ordered
$\lambda_{\max}=\lambda_1 \geq \lambda_2 \geq  \cdots \geq \lambda_n=\lambda_{\min}$.

The \emph{independence number} of a graph $G$, denoted by $\alpha(G)$, is the size of a maximum coclique (also known as stable set or independent set) in a graph, and finding it is known to be in general an NP-hard problem. That is why spectral techniques have been used extensively to provide sharp bounds for the independence numbers of graphs, since the eigenvalues of a graph can be computed in polynomial time. There are two famous spectral bounds on the independence number of a graph: 
the inertia bound by Cvetkovi\'c \cite{inertia} and the ratio bound by Hoffman (unpublished; see \cite[Thm. 3.5.2]{spectra}).

\begin{thm}[Ratio bound, unpublished; see {\cite[Thm. 3.5.2]{spectra}}]\thlabel{thm:Hoffmanboundalpha}
    If $G$ is regular with $n$ vertices and largest and smallest adjacency eigenvalues $\lambda_{\max},\y_{\min}$ respectively, then
		$$\alpha(G)\leq n\frac{-\lambda_{\min}}{\lambda_{\max}-\lambda_{\min}},$$
		and if a coclique $C$ meets this bound, then every vertex not in $C$ is adjacent to precisely $-\lambda_{\min}$ vertices of $C$.
\end{thm}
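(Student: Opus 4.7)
The plan is to use the Rayleigh quotient together with orthogonal decomposition of the characteristic vector of a maximum coclique against the all-ones vector, exploiting that $G$ is regular.

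First I would let $k=\lambda_{\max}$ denote the common degree and recall that $\mathbf{1}$ is a $k$-eigenvector of $A$. Given a coclique $C$ of size $c$, let $\chi\in\{0,1\}^n$ be its characteristic vector and write the orthogonal decomposition $\chi=\tfrac{c}{n}\mathbf{1}+y$ with $y\perp \mathbf{1}$. A direct calculation of norms yields $y^\top y=c-\tfrac{c^2}{n}=\tfrac{c(n-c)}{n}$. Since $C$ is independent, $\chi^\top A\chi=0$, and expanding using $A\mathbf{1}=k\mathbf{1}$ gives
\[
0=\chi^\top A\chi=\frac{c^2 k}{n}+y^\top A y.
\]

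Next I would bound $y^\top Ay$ from below by $\lambda_{\min}\, y^\top y$ (Rayleigh quotient). Substituting the value of $y^\top y$ produces
\[
0\ \geq\ \frac{c^2 k}{n}+\lambda_{\min}\cdot\frac{c(n-c)}{n}\ =\ \frac{c}{n}\bigl(c(k-\lambda_{\min})+\lambda_{\min}n\bigr),
\]
which, upon dividing by the positive quantity $\tfrac{c}{n}$ and rearranging, is exactly the asserted inequality $c\le n\cdot\tfrac{-\lambda_{\min}}{\lambda_{\max}-\lambda_{\min}}$.

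For the equality statement, I would trace back when the Rayleigh bound is tight: equality forces $y$ to be a $\lambda_{\min}$-eigenvector of $A$, so $Ay=\lambda_{\min}y$. Then for any vertex $v\notin C$, writing $(A\chi)_v=\tfrac{ck}{n}+\lambda_{\min}y_v$ and using $y_v=-\tfrac{c}{n}$ (since $\chi_v=0$), I get $(A\chi)_v=\tfrac{c(k-\lambda_{\min})}{n}$, which evaluates to $-\lambda_{\min}$ after plugging in the exact value of $c$. Since $(A\chi)_v$ counts the neighbors of $v$ in $C$, this gives the required structural conclusion.

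The only subtle point is the Rayleigh quotient step: one must observe that the bound $y^\top Ay\ge\lambda_{\min}y^\top y$ holds for arbitrary $y$, so no irreducibility or multiplicity assumption on $\lambda_{\min}$ is needed; the orthogonality $y\perp\mathbf 1$ is used only to evaluate $\mathbf{1}^\top A\mathbf{1}=nk$ cleanly, not to sharpen the Rayleigh bound. Everything else is bookkeeping.
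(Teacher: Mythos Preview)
The paper does not actually prove \thref{thm:Hoffmanboundalpha}; it is quoted as a known result with a pointer to \cite[Thm.~3.5.2]{spectra}. So there is no in-paper proof to compare against.

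Your argument is correct and is one of the two standard proofs. The other standard route---and the one in the cited reference, as well as the one the paper's machinery is built around---is via quotient-matrix interlacing (\thref{cor:quotientmatrixinterlacing}): partition $V(G)$ into $C$ and $V(G)\setminus C$, form the $2\times 2$ quotient matrix
\[
B=\begin{pmatrix}0 & k\\ \frac{ck}{n-c} & k-\frac{ck}{n-c}\end{pmatrix},
\]
and observe that its smallest eigenvalue $-\frac{ck}{n-c}$ must be at least $\lambda_{\min}$, which rearranges to the same bound; tight interlacing then forces the partition to be equitable, yielding the structural consequence. Your Rayleigh-quotient approach and the interlacing approach are essentially dual: yours works directly with the characteristic vector, theirs compresses to a $2\times 2$ matrix first. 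Both give the equality case cleanly.

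One small inaccuracy in your closing remark: the orthogonality $y\perp\mathbf{1}$ is not what lets you evaluate $\mathbf{1}^\top A\mathbf{1}=nk$ (that is just regularity); rather, it is what kills the cross term $\tfrac{2c}{n}\mathbf{1}^\top A y = \tfrac{2ck}{n}\mathbf{1}^\top y$ in the expansion of $\chi^\top A\chi$. This does not affect the correctness of the proof, only the commentary.
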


Since all vertices with the same color form an independent set, upper bounds on the independence number $\alpha$ like the ones above directly yield lower bounds on the \emph{chromatic number} of a graph $\chi(G)$, which is the the smallest number of colors for $V(G)$ so that adjacent vertices are colored differently:
\begin{equation}\label{classicbound}
\chi\geq \lceil\frac{n}{\alpha}\rceil.
\end{equation}

Thus, the ratio bound on $\alpha$ from \thref{thm:Hoffmanboundalpha} gives a lower bound for the chromatic number of a regular graph. However, this bound was shown by Hoffman to hold also for general graphs.

\begin{thm}[Hoffman's bound, \cite{Hoffman}]\thlabel{thm:Hoffmanboundchi}
    Let $G$ be a non-empty graph. Then $$\displaystyle \chi(G) \ge 1-\frac{\y_{\max}(G)}{\y_{\min}(G)}.$$
\end{thm}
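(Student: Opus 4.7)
The plan is to attach to any proper coloring a weighted $k\times k$ quotient matrix and then combine Cauchy interlacing with a trace argument. First I would reduce to the case that $G$ is connected: if $G$ has several components, pick one component $G'$ with $\lambda_{\max}(G')=\lambda_{\max}(G)$; then $\chi(G)\ge \chi(G')$ and $0>\lambda_{\min}(G')\ge \lambda_{\min}(G)$, so the desired inequality for $G'$ implies the one for $G$. Assuming $G$ is connected, let $x>0$ be a Perron--Frobenius eigenvector with $Ax=\lambda_{\max}x$, let $k=\chi(G)$, and fix a proper $k$-coloring with color classes $V_1,\dots,V_k$. Define $w_i\in\R^n$ by $(w_i)_v=x_v$ for $v\in V_i$ and $0$ otherwise, set $s_i=w_i/\|w_i\|$, and assemble these into an $n\times k$ matrix $W=[\,s_1\mid\cdots\mid s_k\,]$; the columns are orthonormal because the $V_i$ partition $V(G)$.

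The key object is the symmetric $k\times k$ matrix $\tilde B:=W^{\top}AW$. Two properties drive the proof. First, since each color class $V_i$ is independent, the diagonal entry $s_i^{\top}As_i$ vanishes, so $\Tr(\tilde B)=0$. Second, let $z\in\R^k$ be the vector with $z_i=\|w_i\|>0$; a direct check (done coordinate-wise on each $V_i$) gives both $Wz=x$ and $W^{\top}x=z$, hence
\[
\tilde B\,z \;=\; W^{\top}A(Wz) \;=\; W^{\top}(\lambda_{\max}x) \;=\; \lambda_{\max}\,z,
\]
so $\lambda_{\max}(A)$ is itself one of the eigenvalues of $\tilde B$.

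To finish, Cauchy interlacing applied to $\tilde B=W^{\top}AW$ (with $W^{\top}W=I_k$) gives $\lambda_{\min}(A)\le \mu \le \lambda_{\max}(A)$ for every eigenvalue $\mu$ of $\tilde B$. Writing the eigenvalues of $\tilde B$ as $\lambda_{\max}(A),\mu_2,\dots,\mu_k$ and using $\Tr(\tilde B)=0$ yields
\[
0 \;=\; \lambda_{\max}(A)+\sum_{i=2}^{k}\mu_i \;\ge\; \lambda_{\max}(A)+(k-1)\,\lambda_{\min}(A).
\]
Since $G$ is non-empty, $\lambda_{\min}(A)<0$, so dividing by $-\lambda_{\min}(A)$ and rearranging gives $k\ge 1-\lambda_{\max}(A)/\lambda_{\min}(A)$, as required. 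The main subtlety is the use of the Perron vector: positivity of $x$ (guaranteed by the connectedness reduction) is what forces $z$ to be strictly positive and places $\lambda_{\max}(A)$ genuinely in the spectrum of $\tilde B$; everything else is a clean consequence of interlacing combined with the fact that the color classes are independent sets.
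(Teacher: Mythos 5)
Your proof is correct and is essentially the paper's own argument: the matrix $W$ is exactly the weight-characteristic matrix $S$ used in the proof of Theorem 3.1, the vector $z$ plays the role of $y$, and the conclusion follows from the same trace-plus-interlacing computation on $W^{\top}AW$. The only cosmetic difference is that you make the reduction to connected graphs explicit and call the key step ``Cauchy interlacing'' where the paper invokes the general interlacing theorem for $S^{\top}AS$ with $S^{\top}S=I$; the substance is identical.
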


If the Hoffman bound (without rounding) of a graph is equal to its chromatic number, then we call the graph \emph{Hoffman colorable}, and every optimal coloring a \emph{Hoffman coloring}. Bipartite graphs and regular complete multipartite graphs are easily seen to be Hoffman colorable. For a non-trivial example, see Figure \ref{hoffmanirregular}. This graph has the surprising property that the Hoffman bound outperforms the bound from (\ref{classicbound}). Since it has $n=9$, $\a=5$, $\chi=3$, Hoffman's bound gives 3 and the classical bound (\ref{classicbound}) gives 2. And in fact, there are many more instances like this, which gives us the initial motivation to further investigate the tightness of Hoffman's bound.

\begin{figure}[ht]
    \begin{center}
        \begin{tikzpicture}[scale=0.4]
            \coordinate (1) at (-2,0);
\coordinate (2) at (0,-2);
\coordinate (3) at (0,2);
\coordinate (4) at (2,0);
\coordinate (5) at (0,0);
\coordinate (6) at (-1,-1);
\coordinate (7) at (-1,1);
\coordinate (8) at (1,-1);
\coordinate (9) at (1,1);
\draw[gray,thick] (1) -- (6);
\draw[gray,thick] (1) -- (7);
\draw[gray,thick] (2) -- (6);
\draw[gray,thick] (2) -- (8);
\draw[gray,thick] (3) -- (7);
\draw[gray,thick] (3) -- (9);
\draw[gray,thick] (4) -- (8);
\draw[gray,thick] (4) -- (9);
\draw[gray,thick] (5) -- (6);
\draw[gray,thick] (5) -- (7);
\draw[gray,thick] (5) -- (8);
\draw[gray,thick] (5) -- (9);
\draw[gray,thick] (6) -- (7);
\draw[gray,thick] (6) -- (8);
\draw[gray,thick] (7) -- (9);
\draw[gray,thick] (8) -- (9);
\filldraw[green] (1) circle (4pt);
\filldraw[green] (2) circle (4pt);
\filldraw[green] (3) circle (4pt);
\filldraw[green] (4) circle (4pt);
\filldraw[green] (5) circle (4pt);
\filldraw[red] (6) circle (4pt);
\filldraw[blue] (7) circle (4pt);
\filldraw[blue] (8) circle (4pt);
\filldraw[red] (9) circle (4pt);
        \end{tikzpicture}
    \end{center}
    \caption{Irregular graph meeting Hoffman's bound: $n=9$, $\a=5$, $\lambda_{\min}=-2, \lambda_{\max}=4$, $\chi=3$.}
    \label{hoffmanirregular}
\end{figure}
In a recent event to celebrate Hoffman's work, Haemers \cite{videoHaemers} suggested the problem of understanding which irregular graphs meet Hoffman's bound on the chromatic number. In \cite{spreads}, several strongly regular graphs are classified to be Hoffman colorable, but other than that not much on the structure of Hoffman colorings of general graphs was known. Investigating in which situations there is tightness indirectly also yields  conditions under which the Hoffman bound might be strengthened, as it was shown by Abiad \cite{Abiad}. Moreover, studying equality of the Hoffman bound also has another point of interest: various graph parameters are sandwiched in between the Hoffman bound and the chromatic number, for example the quantum chromatic number (see \cite{QuantumHom}) and the Lov\'{a}sz number of the complement (see \cite{Lovasz}), and for Hoffman colorable graphs, the values of these parameters are known immediately. See \cite{ParametersOverview} for an overview of such parameters.

Motivated by the above, in this paper we investigate tightness of the Hoffman bound, with a particular focus on irregular graphs. In particular, we show the Decomposition Theorem (\thref{thm:Decomp}), which provides structural requirements for Hoffman colorings with at least three colors. From the Decomposition Theorem we obtain multiple corollaries; we completely classify Hoffman colorability of cone graphs (\thref{thm:cone}) and line graphs (\thref{thm:linegraphs}). We also introduce a partial converse to the Decomposition Theorem, namely the Composition Theorem (\thref{thm:Comp}). The Composition Theorem shows under which conditions a Hoffman colorable graph is extendable with an additional color class to a bigger Hoffman colorable graph. As a consequence, we can prove Hoffman colorability of regular graphs with a structured coloring and a high enough valency (\thref{prop:colorcomplementregular}), providing exponentially many non-isomorphic regular Hoffman colorable graphs. The Decomposition and Composition Theorems enable us to devise an algorithm for computing all connected Hoffman colorable graphs on a given number of vertices and colors. Among the resulting connected Hoffman colorable graphs, we pay particular attention to the irregular graphs where the Hoffman bound outperforms the classical bound (\ref{classicbound}), like the Hoffman colorable graph from Figure \ref{hoffmanirregular}. In Tables \ref{tab:three}, \ref{tab:four}, \ref{tab:five}, and \ref{tab:six} we include the algorithmic results.

This article is organized as follows. In Section \ref{sec:preliminaries} we set out the preliminaries. Section \ref{sec:decomp} covers the Decomposition Theorem (\thref{thm:Decomp}) and its consequences. In Section \ref{sec:comp}, we state the Composition Theorem (\thref{thm:Comp}) and its application to regular graphs. Lastly, in Section \ref{sec:algorithm}, we introduce the algorithm and present the results of applying it.

%%%%%%%%%%%%%%%%%%%%%%%%%%%%%%%%%%%%%%%%%%%%%%%%%%%%%%%%%%%%%%%%%%%%%%%%%%%%%%%%%%%%%%%%%%%%%%%%%%%%%%%%%%%%%
\section{Preliminaries}\label{sec:preliminaries}
%%%%%%%%%%%%%%%%%%%%%%%%%%%%%%%%%%%%%%%%%%%%%%%%%%%%%%%%%%%%%%%%%%%%%%%%%%%%%%%%%%%%%%%%%%%%%%%%%%%%%%%%%%%%%
In this section, we establish basic notation, definitions, and background relating to positive eigenvectors and eigenvalue interlacing. We also cover some of the previous work that has been done on Hoffman colorings. For indicating Hoffman's bound we denote $$ \displaystyle h(G)\coloneqq 1-\frac{\y_{\max}(G)}{\y_{\min}(G)}.$$

%%%%%%%%%%%%%%%%%%%%%%%%%%%%%%%%%%%%%%%%%%%%%%%%%%%%%%%%%%%%%%%%%%%%%%%%%%%%%%%%%%%%%%%%%%%%%%%%%%%%%%%%%%%%%
\subsection{Perron-Frobenius and positive eigenvectors}
%%%%%%%%%%%%%%%%%%%%%%%%%%%%%%%%%%%%%%%%%%%%%%%%%%%%%%%%%%%%%%%%%%%%%%%%%%%%%%%%%%%%%%%%%%%%%%%%%%%%%%%%%%%%%

For disconnected graphs, Hoffman colorability can be deduced from the spectra and chromatic numbers of its connected components. In fact, a Hoffman colorable graph $G$ remains Hoffman colorable when it is disjointly extended by a graph $H$ as long as $\y_{\max}, -\y_{\min}$ and $\chi$ of $H$ are not larger than those of $G$. In particular, we can add as many isolated vertices as we want, artificially increasing the independence number while maintaining the Hoffman bound. In light of comparing the Hoffman bound to (\ref{classicbound}), it is therefore most interesting to look at connected graphs only.

By the Perron-Frobenius Theorem (\cite[Theorem 2.2.1]{spectra}), connected graphs have a (up to scaling) unique eigenvector with only positive entries (a \emph{positive eigenvector}), and it belongs to the largest eigenvalue. As we will see in Section \ref{sec:weight} and \ref{sec:proof}, our results are based on weight-interlacing, which interprets the positive eigenvector as weights belonging to the vertices.

A disconnected graph can also have a positive eigenvector; namely if the largest eigenvalues of all components are equal. So, our results apply more broadly than just to connected graphs. If a disconnected graph $G$ has a positive eigenvector, then it follows that the Hoffman bound of $G$ is equal to the minimum of the Hoffman bounds of its components. Similarly, the chromatic number of $G$ is equal to the maximum of the chromatic numbers of the components of $G$. So if $G$ is Hoffman colorable, then so is every component, and moreover every component shares the invariants $\y_{\max}, \y_{\min}$ and $\chi$ with $G$.

With this in mind, in the following we define a disconnected graph to be Hoffman colorable only if every connected component is Hoffman colorable and the values $\y_{\max}$, $\y_{\min}$, and $\chi$ are the same for every component. This way, we can always assume that a Hoffman colorable graph has a positive eigenvector.

%%%%%%%%%%%%%%%%%%%%%%%%%%%%%%%%%%%%%%%%%%%%%%%%%%%%%%%%%%%%%%%%%%%%%%%%%%%%%%%%%%%%%%%%%%%%%%%%%%%%%%%%%%%%%
\subsection{Interlacing}
%%%%%%%%%%%%%%%%%%%%%%%%%%%%%%%%%%%%%%%%%%%%%%%%%%%%%%%%%%%%%%%%%%%%%%%%%%%%%%%%%%%%%%%%%%%%%%%%%%%%%%%%%%%%%

Let $\y_1\ge \y_2\ge \dots \ge \y_n$ and $\mu_1\ge \mu_2 \ge \dots \ge \mu_m$ be two sequences of real numbers such that $m<n$. The latter sequence  \emph{interlaces} the former whenever
$$\y_i\ge \mu_i \ge \y_{n-m+i} \text{ for } i=1,\dots,m.$$
Interlacing is called \emph{tight} if there exists an integer $k \in \{0,\dots, m\}$ such that $\mu_i=\y_i$ for $i\le k$ and $\mu_i=\y_{n-m+i}$ for $i>k$.

In the context of interlacing of eigenvalues of matrices, we speak of \emph{eigenvalue interlacing}.
\begin{thm}[Interlacing Theorem, {\cite[Theorem 2.5.1: (i),(iv)]{spectra}}]\thlabel{thm:interlacing}
    Let $S$ be an $n\times m$-matrix such that $S^T S=I$. Let $A$ be a symmetric $n\times n$ matrix. Define $B=S^T AS$. Then the eigenvalues of $B$ interlace those of $A$. If this interlacing is tight, then $SB=AS$.
\end{thm}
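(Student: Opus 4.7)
The plan is to deduce the interlacing inequalities from the Courant-Fischer max-min characterization of eigenvalues, exploiting the fact that $S^T S = I$ makes $S$ an isometric embedding of $\mathbb{R}^m$ into $\mathbb{R}^n$. The key identity is that for every $y \in \mathbb{R}^m$, the Rayleigh quotient $(Sy)^T A(Sy)/(Sy)^T(Sy)$ equals $y^T B y / y^T y$; in other words, the Rayleigh quotient of $A$ on the column space of $S$ pulls back to the Rayleigh quotient of $B$ on $\mathbb{R}^m$.

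To show $\mu_i \le \lambda_i$ I would pick an $i$-dimensional subspace $U \subseteq \mathbb{R}^m$ attaining the max-min value $\mu_i$ for $B$; its image $S(U) \subseteq \mathbb{R}^n$ is again $i$-dimensional (since $S$ is injective) and gives a subspace on which the minimum Rayleigh quotient of $A$ is still $\mu_i$, whence $\lambda_i \ge \mu_i$ by max-min. The companion inequality $\lambda_{n-m+i} \le \mu_i$ follows by applying the same argument to $-A$ and $-B$, or equivalently by using the dual min-max characterization with $(m-i+1)$-dimensional subspaces.

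For tightness, I would fix an orthonormal eigenbasis $v_1, \ldots, v_m$ of $B$ with $B v_i = \mu_i v_i$, and set $w_i := S v_i$; these are orthonormal in $\mathbb{R}^n$ and satisfy $w_i^T A w_i = \mu_i$. I would prove by induction that each $w_i$ is itself an eigenvector of $A$ at eigenvalue $\mu_i$, working downward from $w_1$ for indices $i \le k$ and upward from $w_m$ for indices $i > k$; tight interlacing ensures that the two halves meet consistently at the breakpoint. The inductive step uses that, once $w_1, \ldots, w_{i-1}$ are eigenvectors of $A$ for $\lambda_1, \ldots, \lambda_{i-1}$, the spectral theorem (with multiplicities) says the maximum Rayleigh quotient of $A$ on their orthogonal complement equals $\lambda_i$, so any unit vector in that complement achieving $\mu_i = \lambda_i$ must itself be an eigenvector. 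A symmetric argument using minima handles the bottom half. Once this is established, $A S v_i = \mu_i S v_i = S B v_i$ for every $i$, and linearity in the basis $\{v_1, \ldots, v_m\}$ yields $AS = SB$.

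The main obstacle, beyond the routine Rayleigh-quotient bookkeeping that gives the inequalities, is precisely this extraction of eigenvectors from the equality case: one has to verify that a unit vector attaining the extremum of a Rayleigh quotient on an orthogonal complement of previously found eigenvectors is forced to be an eigenvector itself, and to organize the top-down and bottom-up inductions so that they fit together exactly when the interlacing is tight.
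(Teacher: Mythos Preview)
The paper does not give its own proof of this theorem; it is stated as a cited result from Brouwer--Haemers \cite[Theorem~2.5.1]{spectra}. Your argument is correct and is essentially the standard proof found in that reference: Courant--Fischer for the interlacing inequalities, and an induction on eigenvectors (top-down for $i\le k$, bottom-up for $i>k$) to extract $A S v_i = \mu_i S v_i$ in the tight case. One small point worth making explicit in your write-up is that the inductive hypothesis guarantees $W=\mathrm{span}(w_1,\dots,w_{i-1})$ is $A$-invariant, so $A$ restricted to $W^\perp$ is again symmetric with spectrum exactly $\lambda_i,\dots,\lambda_n$ (with multiplicities), which is what justifies that a Rayleigh-quotient maximizer on $W^\perp$ must be an eigenvector; you allude to this but it is the crux of the extraction step.
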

A particular case of this is called Cauchy interlacing. In this case, we take $S$ such that the $m$ columns are independent unit vectors. Then $B$ is a principal submatrix of $A$.
\begin{cor}[Cauchy interlacing, {\cite[Corollary 2.5.2]{spectra}}]\thlabel{cor:indsubgr}
    If $B$ is a principal submatrix of $A$, then the eigenvalues of $B$ interlace those of $A$.
\end{cor}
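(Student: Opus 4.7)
The plan is to deduce this corollary as an immediate specialization of the Interlacing Theorem (\thref{thm:interlacing}) by making a judicious choice of the matrix $S$. Concretely, suppose that $B$ is the principal submatrix of $A$ indexed by a subset $I = \{i_1 < i_2 < \cdots < i_m\} \subseteq \{1,\dots,n\}$. I would take $S$ to be the $n \times m$ matrix whose $j$-th column is the standard basis vector $e_{i_j} \in \R^n$.

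The first step is then to verify the two hypotheses needed. For $S^T S = I_m$, observe that the columns of $S$ are distinct standard basis vectors, hence an orthonormal set, so $(S^T S)_{j,k} = e_{i_j}^T e_{i_k} = \delta_{j,k}$. For the identification $S^T A S = B$, one computes entry-wise
$$
(S^T A S)_{j,k} = e_{i_j}^T A\, e_{i_k} = A_{i_j, i_k},
$$
which is precisely the $(j,k)$-entry of the principal submatrix of $A$ corresponding to $I$, namely $B$.

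With these two observations in hand, \thref{thm:interlacing} applies verbatim, and its conclusion that the eigenvalues of $S^T A S$ interlace those of $A$ is exactly the claim. I do not anticipate any genuine obstacle in this argument; the only point requiring a moment's care is the verification that selecting standard basis vectors as columns of $S$ indeed reproduces the principal submatrix, which is a routine direct calculation. Consequently, the proof is essentially a one-line reduction to the previous theorem once the correct $S$ has been written down.
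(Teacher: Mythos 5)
Your proof is correct and is exactly the argument the paper has in mind: the text preceding the corollary already notes that one takes $S$ with columns equal to distinct standard basis (unit) vectors, so that $S^TS=I$ and $S^TAS$ is the principal submatrix $B$, and then \thref{thm:interlacing} gives the conclusion. Your verification of the two hypotheses is routine and matches the paper's intent.
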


One particularly interesting application of interlacing, is the application to partitions and (weight-)quotient matrices. Weight-quotient interlacing and weight-regularity are the main ingredients of \thref{prop:Abiad} (cf.~\cite[Proposition 5.3(i)]{Abiad}), the interlacing proof of the Hoffman bound (\cite{CPP}), and of \thref{lem:cwr}, leading to the Decomposition Theorem.

%%%%%%%%%%%%%%%%%%%%%%%%%%%%%%%%%%%%%%%%%%%%%%%%%%%%%%%%%%%%%%%%%%%%%%%%%%%%%%%%%%%%%%%%%%%%%%%%%%%%%%%%%%%%%
\subsection{Equitable partitions and quotients}
%%%%%%%%%%%%%%%%%%%%%%%%%%%%%%%%%%%%%%%%%%%%%%%%%%%%%%%%%%%%%%%%%%%%%%%%%%%%%%%%%%%%%%%%%%%%%%%%%%%%%%%%%%%%%

Let $G$ be a graph, and $\mathcal P$ a partition of $V(G)$ into $V_1,\dots,V_m$. The \emph{quotient matrix} given this partition is the matrix $B$ with entries
$$b_{ij}=\frac 1{|V_i|} \mathbbm 1^T A_{ij} \mathbbm 1,$$
with $\mathbbm 1$ the all-ones vector and $A_{ij}$ the principal submatrix of $A$ indexed by the vertices of $V_i$ and $V_j$. This way $b_{ij}$ is equal to the average row sum of $A_{ij}$. In other words, $b_{ij}$ is the average over the vertices in $V_i$ of the number of neighbors in $V_j$. A partition is \emph{equitable} or \emph{regular} if for each row of $A_{ij}$ the sum is equal to $b_{ij}$, or equivalently, if $A_{ij} \mathbbm 1$ is a constant vector. This is also equivalent to the number of neighbors in $V_j$ of a vertex in $V_i$ not depending on the specific vertex one chooses from $V_i$. In this case, the number $b_{ij}$ is called the \emph{intersection number}. The Interlacing Theorem implies the following.
\begin{cor}[Quotient matrix interlacing {\cite[Corollary 2.5.4]{spectra}}]\thlabel{cor:quotientmatrixinterlacing}
    Let $G$ be a graph, with $A$ its adjacency matrix. Let $\mathcal P$ a partition of $V(G)$, inducing the quotient matrix $B$. Then
    \begin{enumerate}[label=(\roman*)]
        \item the eigenvalues of $B$ interlace those of $A$;
        \item if the interlacing is tight, then the partition is equitable.
    \end{enumerate}
\end{cor}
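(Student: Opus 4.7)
The plan is to reduce this corollary to the general Interlacing Theorem (\thref{thm:interlacing}) by choosing a suitable ``partition matrix'' $S$ whose columns encode the parts $V_1, \dots, V_m$ of $\mathcal{P}$.

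First I would set $S$ to be the $n \times m$ matrix whose $j$-th column is $\frac{1}{\sqrt{|V_j|}} \mathbbm{1}_{V_j}$, where $\mathbbm{1}_{V_j}$ is the characteristic vector of $V_j$. Because the parts are disjoint, distinct columns have disjoint support, and each column has unit length, so $S^T S = I_m$. A direct calculation then gives
\[
(S^T A S)_{ij} \;=\; \frac{1}{\sqrt{|V_i|\,|V_j|}}\, \mathbbm{1}^T A_{ij} \mathbbm{1} \;=\; \sqrt{\tfrac{|V_i|}{|V_j|}}\, b_{ij}.
\]
Hence, writing $D = \mathrm{diag}(|V_1|,\dots,|V_m|)$, one has $S^T A S = D^{1/2} B D^{-1/2}$, so the matrix $S^T A S$ is similar to the quotient matrix $B$ and shares its spectrum.

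Part (i) now follows immediately: by \thref{thm:interlacing} the eigenvalues of $S^T A S$ interlace those of $A$, and since $B$ has the same eigenvalues, so do those of $B$.

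For part (ii), the second assertion of \thref{thm:interlacing} says that tight interlacing forces $S(S^T A S) = A S$. I would then interpret this identity column by column: the $j$-th column of $AS$ is the vector whose entry at a vertex $v$ is $\frac{1}{\sqrt{|V_j|}}$ times the number of neighbors of $v$ in $V_j$, while the $j$-th column of $S(S^T A S)$ is a vector that is constant on each class $V_i$. Equating the two shows that, for every $i,j$, every vertex in $V_i$ has the same number of neighbors in $V_j$; this is exactly the defining property of an equitable partition. The main obstacle, and essentially the only non-routine step, is recognizing that $S^T A S$ is not literally $B$ but only conjugate to it via $D^{1/2}$, so some care is needed when translating the tight-interlacing identity back to a statement about row sums of the blocks $A_{ij}$.
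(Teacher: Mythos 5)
Your proof is correct and is essentially the standard argument from the cited reference (and the same normalized-characteristic-matrix technique the paper itself deploys for the weighted analogue in the proof of \thref{lem:cwr}, where the all-ones vector is replaced by the Perron eigenvector): one checks $S^TS=I$, observes $S^TAS$ is similar to $B$ via $D^{1/2}$, and reads equitability off the tight-interlacing identity $S(S^TAS)=AS$ column by column. No gaps.
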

For Hoffman colorings of regular graphs, the ratio bound immediately implies the following. Alternatively, it can be proved using \thref{cor:quotientmatrixinterlacing}.

\begin{prop}[{\cite[Proposition 2.3]{3chromDRG}}]\thlabel{prop:constantequitable}
    Let $V_1,\dots, V_\chi$ be a Hoffman coloring of a regular graph $G$, then
    \begin{enumerate}[label=(\roman*)]
        \item the partition $V_1,\dots, V_\chi$ is equitable;
        \item all intersection numbers $b_{ij}$ of this equitable partition with $i\ne j$ equal $-\y_{\min}(G)$;
        \item all color classes have equal size.
    \end{enumerate}
\end{prop}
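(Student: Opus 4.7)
The plan is to derive all three statements from the equality case of the ratio bound (\thref{thm:Hoffmanboundalpha}), which is available because $G$ is regular. Writing $d=\y_{\max}(G)$ for the common degree and $\tau=-\y_{\min}(G)$, the Hoffman-colorability assumption $\chi=h(G)$ reads $\chi=(d+\tau)/\tau$, while the ratio bound says $\alpha(G)\le n\tau/(d+\tau)=n/\chi$.

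To establish (iii), I would note that each color class $V_i$ is independent, so $|V_i|\le \alpha(G)$; summing over $i$ and invoking the ratio bound gives
$$
n=\sum_{i=1}^{\chi}|V_i|\le \chi\cdot \alpha(G)\le \chi\cdot \frac{n}{\chi}=n.
$$
Equality must therefore hold throughout, so $|V_i|=\alpha(G)=n/\chi$ for every $i$, and each color class is in particular a maximum coclique attaining the ratio bound.

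For (i) and (ii), I would then invoke the equality clause of \thref{thm:Hoffmanboundalpha}: since each $V_i$ meets the ratio bound, every vertex outside $V_i$ has exactly $\tau=-\y_{\min}(G)$ neighbors inside $V_i$. Specialising this statement to vertices in $V_j$ with $j\ne i$ tells us that the number of neighbors in $V_i$ of a vertex of $V_j$ does not depend on the chosen vertex and equals $\tau$, i.e.\ $A_{ji}\mathbbm{1}$ is the constant vector $\tau\mathbbm{1}$. The diagonal blocks $A_{ii}$ are trivially row-constant with $b_{ii}=0$ because $V_i$ is independent. Hence the partition is equitable and $b_{ij}=-\y_{\min}(G)$ for all $i\ne j$. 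The only real step is the bookkeeping in the displayed chain of inequalities, which forces equality in the ratio bound for every color class at once; after that everything follows directly from \thref{thm:Hoffmanboundalpha}, and no appeal to \thref{cor:quotientmatrixinterlacing} is needed.
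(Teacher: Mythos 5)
Your argument is correct and is precisely the route the paper indicates when it says the ratio bound ``immediately implies'' the proposition: equality in $n=\sum_i|V_i|\le\chi\,\alpha(G)\le n$ forces each color class to be a maximum coclique attaining the ratio bound, and the equality clause of \thref{thm:Hoffmanboundalpha} then yields equitability with $b_{ij}=-\y_{\min}(G)$. Nothing is missing.
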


%%%%%%%%%%%%%%%%%%%%%%%%%%%%%%%%%%%%%%%%%%%%%%%%%%%%%%%%%%%%%%%%%%%%%%%%%%%%%%%%%%%%%%%%%%%%%%%%%%%%%%%%%%%%%
\subsection{Weight-equitable partitions and -quotients}\label{sec:weight}
%%%%%%%%%%%%%%%%%%%%%%%%%%%%%%%%%%%%%%%%%%%%%%%%%%%%%%%%%%%%%%%%%%%%%%%%%%%%%%%%%%%%%%%%%%%%%%%%%%%%%%%%%%%%%

In \cite{Fiol}, Fiol uses weights, coming from a positive eigenvector of the graph, to ``regularize'' irregular graphs. One interesting consequence is that we can consider weight-quotient matrices. Again, let $\mathcal P$ be a partition. Suppose $x$ is a positive eigenvector of $G$. Write $y_i$ for the restriction of $x$ onto color class $V_i$. So $y_i$ is a vector indexed by the vertices in $V_i$, and $y_i(u)=x(u)$ for $u\in V_i$. Then the \emph{weight-quotient matrix} given the partition is the matrix $B^*$ with entries
$$b^*_{ij}=\frac 1{||y_i||^2} y_i ^T A_{ij} y_j,$$
which is a weighted average of the weighted row sums. A partition is \emph{weight-equitable} or \emph{weight-regular} if the vector $A_{ij} y_j$ is a scalar multiple of $y_i$ for every $i$ and $j$. In this case, this scalar $b^*_{ij}$ is called the \emph{weight-intersection number}. For every pair of distinct colors $i,j$ we must have in this case
$$x(u) b^*_{ij}=\sum_{v\in N_j(u)} x(v),$$
where $u$ is a vertex of $V_i$, and $N_j(u)$ is the set of neighbors of $u$ in $V_j$. If $x$ is a constant vector, then all of the above is equivalent to an equitable partition.

We can again use interlacing for weight-equitability, to get the following statement, which is analogous to \thref{cor:quotientmatrixinterlacing}.
\begin{cor}[{\cite[Lemma 2.3]{Fiol}}]\thlabel{lem:weightpartition}
    Let $G$ be a graph with adjacency matrix $A$. Let $\mathcal P$ be a partition of $V(G)$, inducing the weight-quotient matrix $B$ given a positive eigenvector. Then
    \begin{enumerate}[label=(\roman*)]
        \item the eigenvalues of $B$ interlace those of $A$;
        \item if the interlacing is tight, then the partition is weight-equitable.
    \end{enumerate}
\end{cor}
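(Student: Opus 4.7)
The plan is to reduce this to the Interlacing Theorem (\thref{thm:interlacing}) by constructing an appropriate $n \times m$ matrix $S$ from the positive eigenvector, mimicking the classical proof of \thref{cor:quotientmatrixinterlacing} but with weighted characteristic vectors in place of uniform ones.

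First I would let $x$ be the positive eigenvector underlying the weight-quotient, and let $y_i$ denote its restriction to $V_i$, viewed as extended by zeros to an element of $\R^{V(G)}$. I would then define $S$ to be the $n \times m$ matrix whose $i$-th column is $y_i / \|y_i\|$ (these columns are well-defined since all entries of $x$ are strictly positive, so $\|y_i\|\neq 0$). Because the supports of the $y_i$ are disjoint, the columns of $S$ are orthonormal, so $S^T S = I$, and we may set $C \coloneqq S^T A S$. A short calculation gives
\[ C_{ij} = \frac{1}{\|y_i\|\,\|y_j\|}\, y_i^T A_{ij} y_j. \]
The weight-quotient matrix $B$ of the excerpt has $B_{ij} = \frac{1}{\|y_i\|^2} y_i^T A_{ij} y_j$, which is in general not symmetric, but if $D$ is the diagonal matrix with $D_{ii} = \|y_i\|$ then a direct check shows $C = D B D^{-1}$. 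Thus $C$ and $B$ are similar, so they share the same (real) spectrum.

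Part (i) now follows immediately: \thref{thm:interlacing} applied to $S$ tells us the eigenvalues of $C$ interlace those of $A$, and since $B$ has the same eigenvalues as $C$, so do those of $B$.

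For part (ii), the Interlacing Theorem also yields that tight interlacing forces $SC = AS$. Reading this column by column: the $j$-th column of $AS$ restricted to coordinates in $V_i$ is $A_{ij} y_j / \|y_j\|$, while the same restriction of $(SC)$'s $j$-th column is $C_{ij}\, y_i / \|y_i\|$. Equating gives
\[ A_{ij} y_j \;=\; \frac{C_{ij}\,\|y_j\|}{\|y_i\|}\, y_i \;=\; B_{ij}\, y_i, \]
so $A_{ij} y_j$ is a scalar multiple of $y_i$ with the scalar being exactly the weight-intersection number $b^*_{ij}$; this is the definition of weight-equitability.

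The only subtlety I expect is the asymmetry of $B$: one has to be careful that interlacing as stated applies to a symmetric matrix, which is why the conjugation $C = DBD^{-1}$ is needed to transfer spectral information from the symmetric object $S^T A S$ to the (non-symmetric) weight-quotient. Beyond that, all steps are bookkeeping with the block structure inherited from the partition.
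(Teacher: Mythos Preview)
Your proof is correct and follows essentially the same route as the cited reference and as the paper itself implicitly uses: the weight-characteristic matrix $S$ you construct is exactly the one the paper writes down in the proof of \thref{lem:cwr}, and your observation that the (non-symmetric) weight-quotient matrix $B$ is similar to the symmetric $C=S^TAS$ via $C=DBD^{-1}$ is precisely the standard way to transfer the interlacing statement. The paper states this corollary without proof (citing Fiol), so there is nothing further to compare.
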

Abiad \cite{Abiad} uses Corollary \ref{lem:weightpartition} to partially extend \thref{prop:constantequitable} to irregular graphs, and used it to prove the following necessary condition on Hoffman colorings.
\begin{prop}[{\cite[Proposition 5.3(i)]{Abiad}}]\thlabel{prop:Abiad}
    The partition defined by a Hoffman coloring is weight-equitable.
\end{prop}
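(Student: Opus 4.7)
The plan is to apply weight-quotient interlacing (\thref{lem:weightpartition}) to the color partition $V_1,\dots,V_\chi$ of a Hoffman coloring and then show that the interlacing is tight, from which weight-equitability follows immediately. Let $x$ denote the positive eigenvector available by the discussion in Section~2.1, write $y_i$ for its restriction to $V_i$, and let $B^*$ be the associated weight-quotient matrix. Two preliminary observations set the stage: since each color class is an independent set, the diagonal blocks $A_{ii}$ vanish, so $b^*_{ii}=0$ and hence $\Tr(B^*)=0$; and restricting $Ax=\y_{\max}x$ to $V_i$ and then taking the inner product with $y_i/\lVert y_i\rVert^2$ yields $\sum_j b^*_{ij}=\y_{\max}$, so the all-ones vector in $\R^\chi$ is an eigenvector of $B^*$ with eigenvalue $\y_{\max}$.

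Next I would pin down the remaining spectrum of $B^*$. By \thref{lem:weightpartition}(i) the eigenvalues $\mu_1\ge\dots\ge\mu_\chi$ of $B^*$ interlace those of $A$, so in particular $\mu_i\ge\y_{\min}$ for every $i$. Combining zero trace with the Hoffman equality $\chi=1-\y_{\max}/\y_{\min}$, which rearranges to $(\chi-1)\y_{\min}=-\y_{\max}$, gives
$$\sum_{i=2}^{\chi}\mu_i \;=\; -\y_{\max} \;=\; (\chi-1)\y_{\min}.$$
Since each summand on the left is bounded below by $\y_{\min}$, equality forces $\mu_i=\y_{\min}$ for all $i\ge 2$. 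Hence the spectrum of $B^*$ is $\{\y_{\max},\y_{\min},\dots,\y_{\min}\}$, and with $k=1$ this matches the tight-interlacing pattern from \thref{thm:interlacing}: $\mu_1=\y_1(A)$ and $\mu_i=\y_{n-\chi+i}(A)$ for $i\ge 2$, the latter because interlacing forces $\y_{n-\chi+i}\le\mu_i=\y_{\min}$ while $\y_{\min}$ is the overall smallest eigenvalue of $A$.

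The tight interlacing clause of \thref{thm:interlacing}, transported through \thref{lem:weightpartition}, then yields that $A_{ij}y_j$ is a scalar multiple of $y_i$ for every pair $(i,j)$, which is precisely the definition of weight-equitability for the color partition. The only real technical point I anticipate is the mild asymmetry of $B^*$: strictly speaking one should apply \thref{thm:interlacing} to the symmetrization $D^{1/2}B^*D^{-1/2}$ with $D=\operatorname{diag}(\lVert y_i\rVert^2)$, which is the genuine matrix of the form $S^TAS$ with $S^TS=I$ (the columns of $S$ being the unit-norm extensions of the $y_i$ by zero outside $V_i$). This symmetrization has the same spectrum as $B^*$, so the spectral bookkeeping above is unaffected and the conclusion transfers back to $B^*$ itself.
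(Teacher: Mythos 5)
Your proof is correct and follows essentially the same route as the paper's: form the weight-quotient matrix of the color partition, use the zero trace together with the Hoffman equality $(\chi-1)\y_{\min}=-\y_{\max}$ and the interlacing lower bound $\mu_i\ge\y_{\min}$ to force the spectrum $\{\y_{\max},\y_{\min}^{\chi-1}\}$, conclude tight interlacing, and invoke \thref{lem:weightpartition}(ii). Your closing remark about symmetrizing $B^*$ via $D^{1/2}B^*D^{-1/2}$ is exactly what the paper's weight-characteristic matrix $S$ accomplishes, so the two arguments coincide.
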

The power of \thref{prop:Abiad} is that it generalizes the (weight-)equitability of the color partition to irregular graphs, extending (i) of \thref{prop:constantequitable} to irregular graphs. In \thref{lem:cwr}, we extend the rest of \thref{prop:constantequitable} to irregular graphs as well.

%%%%%%%%%%%%%%%%%%%%%%%%%%%%%%%%%%%%%%%%%%%%%%%%%%%%%%%%%%%%%%%%%%%%%%%%%%%%%%%%%%%%%%%%%%%%%%%%%%%%%%%%%%%%%
\section{The Decomposition Theorem and its consequences}\label{sec:decomp}
%%%%%%%%%%%%%%%%%%%%%%%%%%%%%%%%%%%%%%%%%%%%%%%%%%%%%%%%%%%%%%%%%%%%%%%%%%%%%%%%%%%%%%%%%%%%%%%%%%%%%%%%%%%%%

The Decomposition Theorem gives a necessary condition on the structure of Hoffman colorings. Where 2-colorings of graphs are automatically Hoffman colorings without any conditions, \thref{lem:cwr} and the Decomposition Theorem provide a condition that Hoffman colorings with more colors must satisfy. The Decomposition Theorem has various applications. In particular, we can classify Hoffman colorability of line graphs and cone graphs.

%%%%%%%%%%%%%%%%%%%%%%%%%%%%%%%%%%%%%%%%%%%%%%%%%%%%%%%%%%%%%%%%%%%%%%%%%%%%%%%%%%%%%%%%%%%%%%%%%%%%%%%%%%%%%
\subsection{The Decomposition Theorem}\label{sec:proof}
%%%%%%%%%%%%%%%%%%%%%%%%%%%%%%%%%%%%%%%%%%%%%%%%%%%%%%%%%%%%%%%%%%%%%%%%%%%%%%%%%%%%%%%%%%%%%%%%%%%%%%%%%%%%%

As mentioned before, we extend \thref{prop:Abiad} (\cite[Proposition 5.3(i)]{Abiad}) mirroring \thref{prop:constantequitable}.
\begin{thm}\thlabel{lem:cwr}
Let $G$ be Hoffman colorable with coloring $V(G)=\bigsqcup_{i=1}^\chi V_i$. Then
\begin{enumerate}[label=(\roman*)]
    \item the partition $V_1,\dots,V_{\chi}$ is weight-regular;
    \item all irreflexive intersection numbers $b^*_{ij}$ of this weight-equitable partition equal $-\y_{\min}(G)$;
    \item the restrictions of the positive eigenvector of $G$ onto the color classes have norm independent of the color class.
\end{enumerate}
\end{thm}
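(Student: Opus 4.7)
The plan is to analyze the weight-quotient matrix $B^*$ of the Hoffman coloring via a symmetric proxy. Let $x$ be the positive eigenvector of $G$, write $y_i$ for its restriction to $V_i$, set $w_i = \|y_i\|$, and let $S$ be the $n \times \chi$ matrix whose $i$-th column is $y_i/w_i$, extended by zeros outside $V_i$. Then $S^T S = I_\chi$, and writing $D = \mathrm{diag}(w_1,\dots,w_\chi)$ one checks $S^T A S = D^{-1} B^* D$, so $B^*$ and $S^T A S$ share their spectrum. Two basic facts follow: $\Tr(S^T A S) = 0$ because every color class is independent, and the vector $\omega = (w_1,\dots,w_\chi)^T$ satisfies $S\omega = x$, hence $S^T A S\, \omega = \lambda_{\max} \omega$, so $\lambda_{\max}$ is an eigenvalue of $B^*$ with eigenvector $\omega$.

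Next I would pin down the entire spectrum of $B^*$. Weight-quotient interlacing (\thref{lem:weightpartition}) gives that all eigenvalues $\mu_1 \ge \dots \ge \mu_\chi$ of $S^T A S$ lie in $[\lambda_{\min}(A), \lambda_{\max}(A)]$, with $\mu_1 = \lambda_{\max}$. The trace condition gives $\sum_{i\ge 2}\mu_i = -\lambda_{\max}$, while the Hoffman-colorability equality rewrites as $(\chi-1)\lambda_{\min} = -\lambda_{\max}$; combined with $\mu_i \ge \lambda_{\min}$, this forces $\mu_2 = \dots = \mu_\chi = \lambda_{\min}$. Hence $B^*$ has spectrum $\{\lambda_{\max},\lambda_{\min}^{(\chi-1)}\}$, the interlacing is tight, and \thref{lem:weightpartition}(ii) yields weight-regularity, which is (i) (and reproves \thref{prop:Abiad}).

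With the spectrum determined, (ii) and (iii) follow quickly. The matrix $S^T A S - \lambda_{\min} I_\chi$ is rank one, positive semidefinite and has $\omega$ as eigenvector for its unique non-zero eigenvalue $\lambda_{\max}-\lambda_{\min}$, so
$$S^T A S - \lambda_{\min} I_\chi = \frac{\lambda_{\max}-\lambda_{\min}}{\|\omega\|^2}\,\omega\omega^T.$$
Reading diagonal entries yields $-\lambda_{\min} = \frac{\lambda_{\max}-\lambda_{\min}}{\|\omega\|^2}\, w_i^2$ for every $i$, so all $w_i^2$ coincide, proving (iii). Pushing the off-diagonal entries back through $S^T A S = D^{-1}B^*D$ and using (iii) yields $b^*_{ij} = -\lambda_{\min}$ for $i\ne j$, which is (ii). The crux of the argument is the tightness step, where Hoffman equality, interlacing, and the vanishing trace must combine to collapse $\chi-1$ eigenvalues of $B^*$ onto $\lambda_{\min}$; once that is done, the remaining claims follow mechanically.
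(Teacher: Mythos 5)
Your proposal is correct and follows essentially the same route as the paper: the weight-characteristic matrix $S$, the trace-plus-interlacing argument forcing the spectrum of $S^TAS$ to be $\{\lambda_{\max},\lambda_{\min}^{(\chi-1)}\}$, tightness giving (i), and then the rank-one spectral decomposition (which the paper isolates as \thref{lem:J-Imultiple}) giving (ii) and (iii). The only slip is cosmetic: the conjugation should read $B^*=D^{-1}(S^TAS)D$, i.e.\ $S^TAS=DB^*D^{-1}$, but since (iii) makes $D$ a scalar matrix this changes nothing downstream.
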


Part (i) is precisely \cite[Proposition 5.3(i)]{Abiad} (\thref{prop:Abiad}), and \thref{prop:constantequitable} is equivalent to \thref{lem:cwr} restricted to regular graphs. Note that Part (iii) is satisfied by any bipartite graph, but not necessarily by non-bipartite graphs.

In order to prove \thref{lem:cwr}, we use the following lemma.

\begin{lem}\thlabel{lem:J-Imultiple}
    Let $M$ be a symmetric real matrix that has zeroes on the diagonal and has two eigenvalues of which one is simple and positive. If $M$ only has non-negative entries, then $M$ is a scalar multiple of $J-I$, where $J$ is the all-ones matrix. Moreover, this scalar is equal to the absolute value of the least eigenvalue of $M$.
\end{lem}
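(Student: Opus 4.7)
The plan is to exploit the fact that $M$ has only two distinct eigenvalues together with the hypothesis that the larger one is simple: under those assumptions $M$ admits a very rigid spectral decomposition, and the zero diagonal plus the non-negativity of the off-diagonal entries will force it to be a multiple of $J-I$.

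First I would write the spectral decomposition. Call the two eigenvalues $\mu_1>\mu_2$, with $\mu_1>0$ simple. Let $v$ be a unit eigenvector for $\mu_1$. Since $\mu_1$ is simple, the spectral projector onto its eigenspace is $vv^T$, and the projector onto the $\mu_2$-eigenspace is $I-vv^T$. Hence
$$M \;=\; \mu_1 vv^T + \mu_2(I-vv^T)\;=\;(\mu_1-\mu_2)\,vv^T + \mu_2 I.$$

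Next I would read off two consequences from this identity. Because $M$ has zeroes on the diagonal, the $(i,i)$ entry gives $(\mu_1-\mu_2)v_i^2+\mu_2=0$ for every $i$, so all $v_i^2$ are equal; in particular no entry of $v$ is zero. Because the off-diagonal entries of $M$ are non-negative and $\mu_1-\mu_2>0$, the product $v_iv_j$ is non-negative for all $i\ne j$, so the nonzero entries of $v$ all share the same sign. Combined with the previous step, $v$ is (up to sign) the constant vector $\frac1{\sqrt n}\mathbbm 1$, so $vv^T=\frac1n J$.

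Substituting back,
$$M \;=\; \tfrac{\mu_1-\mu_2}{n}\,J + \mu_2 I.$$
The zero-diagonal condition $\frac{\mu_1-\mu_2}{n}+\mu_2=0$ then yields $\mu_1-\mu_2=-n\mu_2$, whence $M=-\mu_2(J-I)$. Since $\mu_1>0$ and $\mu_1=-(n-1)\mu_2$, we get $\mu_2<0$, so the scaling constant $-\mu_2=|\mu_2|$ is positive, and $\mu_2$ is by construction the least eigenvalue. This gives both statements of the lemma.

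I do not foresee a real obstacle: the only thing to check carefully is that the combination of "zero diagonal" and "non-negative off-diagonal entries" is strong enough to pin down the sign pattern of $v$, which it is precisely because $\mu_1$ is simple (otherwise the spectral projector is not a rank-one matrix of the form $vv^T$ and the argument collapses).
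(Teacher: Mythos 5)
Your proof is correct and follows essentially the same route as the paper's: spectral decomposition $M=(\mu_1-\mu_2)vv^T+\mu_2 I$ via the rank-one projector onto the simple eigenvalue, then the zero diagonal forces $|v_i|$ constant and the non-negative off-diagonal entries force a constant sign, giving $vv^T=\frac1n J$ and $M=-\mu_2(J-I)$. The only step you assert without comment is that the simple positive eigenvalue is the \emph{larger} of the two; this does follow (from $\Tr M=0$, since otherwise the trace would be positive), and the paper disposes of it up front by using the zero trace to write the spectrum as $\{(n-1)\nu,(-\nu)^{n-1}\}$, whereas you recover the relation $\mu_1=-(n-1)\mu_2$ only at the end.
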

\begin{proof}
Let $n$ be the size of $M$. Since the trace of $M$ is 0, we can find a positive real number $\nu$ such that the spectrum of $M$ is given by
$$\Spec(M)=\{(n-1)\nu,(-\nu)^{n-1}\}.$$
Let $x$ be an eigenvector for $(n-1)\nu$ of norm 1 such that $x_1\ge 0$. Then $x$ generates the eigenspace for $(n-1)\nu$, and the space of vectors orthogonal to $x$ forms the eigenspace for $-\nu$. The projection onto the eigenspace for $(n-1)\nu$ is now given by $x x^T$, and hence the projection onto the eigenspace for $-\nu$ is $I-xx^T$. By spectral decomposition, we obtain
$$M=(n-1)\nu\cdot  xx^T -\nu(I-x x^T)=\nu(n \cdot x x^T - I).$$
At the diagonal we thus have $M_{i,i}=\nu(nx_i^2-1)=0$. We conclude that $x_i=\pm 1/\sqrt n$ for all $i$. For values off the diagonal, we get $M_{i,j}=n\nu x_ix_j$. Since $n$ and $\nu$ are positive and $M$ only has non-negative entries, $x_i$ and $x_j$ must be of the same sign. We conclude that $x$ is a constant vector, so for all $i$ we have $x_i=x_1>0$. Now $xx^T= x_1^2 J=\frac 1n J$, and so $M=\nu (J-I)$.
\end{proof}

We are now ready to prove \thref{lem:cwr}. For the sake of completeness, we include the proof of \cite[Proposition 5.3(i)]{Abiad} as well. Moreover, along the way we see a proof of the Hoffman bound (\thref{thm:Hoffmanboundchi}) using interlacing, see \cite{CPP} or \cite{ProefschriftHaemers}.

\begin{proof}[Proof of \thref{lem:cwr}]
Let $x$ be the positive eigenvector of $G$ and write $y(i)=\sqrt{\sum_{v\in V_i} x(v)^2}$, the norm of the restriction of $x$ onto color class $V_i$. Let $S$ be the weight-characteristic matrix, which is a $ |V|\times \chi$-matrix given by
\[
S_{v,i}=\begin{cases}
    \frac{x(v)}{y(i)} &$if $v\in V_i,\\
    0 &$otherwise$.
\end{cases}
\]
Then note that $S^TS=I_\chi$, so now we can apply \thref{thm:interlacing} to get that the eigenvalues of $B=S^TAS$ interlace those of $A$. The vector $y$ with $y(i)$ defined as before is an eigenvector of $B$ with eigenvalue $\y_{\max}(A)$, because $(Sy)(v)=x(v)$ for all $v$. Also, $B$ has zeroes on the diagonal, since the color classes are independent sets. So the trace of $B$ is 0, hence the sum of the eigenvalues of $B$ is 0. Since the eigenvalues of $B$ interlace those of $A$, all the eigenvalues of $B$ are bounded below by $\y_{\min}(A)$, and hence
$$0=\text{tr}(B)\ge \y_{\max}(A) +(\chi-1) \y_{\min}(A).$$
Reordering gives the Hoffman bound, and this is exactly the interlacing proof. Since the graph $G$ is Hoffman colorable, this inequality is an equality, so that all eigenvalues of $B$, except the largest one, are equal to $\y_{\min}(A)$. We can then conclude two things:
\begin{enumerate}[label=(\arabic*)]
    \item The interlacing of $B$ and $A$ is tight,
    \item $B$ satisfies the requirements of \thref{lem:J-Imultiple}.
\end{enumerate}
The first point gives weight-equitability by \thref{lem:weightpartition}, and this is Abiad's proof of \cite[Proposition 5.3(i)]{Abiad}. The matrix $B$ now contains all the weight-intersection numbers.

However, by the second point, $B$ is a scalar multiple of $J-I$, say $B=\nu \cdot (J-I)$, where $\nu=-\y_{\min}(B)$, which by tight interlacing is equal to $-\y_{\min}(A)$, proving the second part of this result. The vector $y$ is an eigenvector to the largest eigenvalue of $B$, which is constant, so $y(i)$ is independent of $i$. This is Part (iii) of this result.
\end{proof}

Even though \thref{lem:cwr} is interesting in itself, we can go further and introduce the Decomposition Theorem, providing a new necessary condition on Hoffman colorings.

\begin{thm}[Decomposition Theorem]\thlabel{thm:Decomp}
    Let $G$ be Hoffman colorable with coloring $V(G)=\bigsqcup_{i=1}^\chi V_i$. Let $C$ be a subset of the colors $\{1,\dots,\chi\}$ with $|C|\ge 2$. Let $H$ be the induced subgraph of $G$ on the vertices $\bigcup_{i\in C} V_i$. Then the following hold.
    \begin{enumerate}[label=(\roman*)]
        \item $H$ is Hoffman colorable, with coloring $V(H)=\bigsqcup_{i \in C} V_i$;
        \item $\displaystyle\y_{\max}(H)=\frac{|C|-1}{\chi-1}\y_{\max}(G)$;
        \item The restriction of the positive eigenvector of $G$ to $H$ is a positive eigenvector of $H$, and consequently its eigenvalue is $\y_{\max}(H)$;
        \item $\y_{\min}(H)=\y_{\min}(G)$.
    \end{enumerate}
\end{thm}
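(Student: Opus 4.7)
My strategy is to use \thref{lem:cwr} as the main input and to prove the four parts in the order (iii), (ii), (iv), (i). Let $x$ denote the positive eigenvector of $G$ and set $\nu = -\y_{\min}(G)$. By \thref{lem:cwr}, the coloring of $G$ is weight-regular with all off-diagonal weight-intersection numbers equal to $\nu$, and summing these relations over all colors $j\ne i$ for a vertex $v\in V_i$ gives the identity
$$\y_{\max}(G)\, x(v) \;=\; \sum_{j\ne i} \sum_{w\in N_j(v)} x(w) \;=\; (\chi-1)\nu\, x(v),$$
so $\y_{\max}(G)=(\chi-1)\nu$. This identity will do most of the bookkeeping.

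For (iii), I restrict $x$ to $H$ to get $x_H$. For $v\in V_i$ with $i\in C$, only the colors in $C\setminus\{i\}$ contribute to $N_H(v)$, so the same computation as above, but restricted to $j\in C\setminus\{i\}$, yields $(A_H x_H)(v) = (|C|-1)\nu\, x_H(v)$. Thus $x_H$ is a positive eigenvector of $A_H$ with eigenvalue $(|C|-1)\nu$, and by Perron--Frobenius (applied to each connected component, using that $x_H$ is strictly positive everywhere) this eigenvalue is $\y_{\max}(H)$. Part (ii) is then immediate: $\y_{\max}(H)=(|C|-1)\nu = \frac{|C|-1}{\chi-1}\y_{\max}(G)$.

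For (iv), one inequality comes for free: Cauchy interlacing (\thref{cor:indsubgr}) gives $\y_{\min}(H)\ge \y_{\min}(G)=-\nu$. For the other direction, I apply weight-quotient interlacing to $H$ with the partition $\{V_i\}_{i\in C}$ and the positive eigenvector $x_H$. The crucial observation is that the weight-intersection numbers of this partition of $H$ are inherited unchanged from $G$: for $i,j\in C$ with $i\ne j$ and $v\in V_i$, all $V_j$-neighbors of $v$ already lie in $V(H)$, so $\sum_{w\in N_j(v)\cap V(H)} x(w) = \nu\, x(v)$. Hence the weight-quotient matrix of $H$ is exactly $\nu(J_{|C|}-I_{|C|})$, whose spectrum is $(|C|-1)\nu$ and $-\nu$ with multiplicity $|C|-1$. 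Applying \thref{lem:weightpartition}(i) gives $\y_{\min}(H)\le -\nu$, completing (iv).

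For (i), combining (ii) and (iv) yields $h(H)=1-\y_{\max}(H)/\y_{\min}(H)=|C|$, and the induced coloring uses $|C|$ colors, so by \thref{thm:Hoffmanboundchi} we get $\chi(H)=|C|=h(H)$. The point that requires a little care is the case when $H$ is disconnected: the paper's convention requires every component to share the invariants $\y_{\max}, \y_{\min}, \chi$ and to be Hoffman colorable on its own. Here I would note that $\nu>0$ and weight-regularity force each connected component of $H$ to meet every class $V_i$ with $i\in C$ (otherwise some $v$ in the component would have $\nu x(v)=0$), so the arguments for (iii) and (iv) above apply verbatim to each component, giving uniform invariants across components. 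The main obstacle in the whole proof is really this last bookkeeping together with identifying the weight-quotient matrix of $H$ as $\nu(J-I)$; the rest is a clean application of Perron--Frobenius and interlacing.
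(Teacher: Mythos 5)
Your proof is correct. It follows the paper's argument for parts (ii) and (iii) essentially verbatim: restrict the Perron vector, use the constant weight-intersection numbers $\nu=-\y_{\min}(G)$ from \thref{lem:cwr}, and apply Cauchy interlacing to get $\y_{\min}(H)\ge-\nu$. Where you diverge is in closing the loop: the paper never proves $\y_{\min}(H)\le-\nu$ directly, but instead observes that $\y_{\min}(H)\ge-\nu$ already forces $h(H)\ge 1+\nu(|C|-1)/\nu=|C|$, which sandwiched against $\chi(H)\le|C|$ and Hoffman's bound collapses to equality, yielding (i) and (iv) simultaneously. You instead establish (iv) first by a second interlacing argument, identifying the weight-quotient matrix of $H$ (with respect to $x_H$ and the induced partition) as $\nu(J-I)$ and reading off $\y_{\min}(H)\le-\nu$ from its least eigenvalue; (i) then follows. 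Both routes are valid; the paper's is shorter, while yours pins down $\y_{\min}(H)$ structurally without invoking Hoffman's bound on $H$, and it essentially re-proves the $B=\nu(J-I)$ computation from \thref{lem:cwr} at the level of $H$. Your explicit treatment of the disconnected case (each component of $H$ meets every class in $C$, so the invariants are uniform across components) is a welcome piece of care that the paper leaves to its conventions on positive eigenvectors of disconnected graphs.
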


\begin{proof}
Write $x$ for the positive eigenvector of $G$, and write $\nu=-\y_{\min}(G)$. Note that by \thref{lem:cwr} for all $i,j\in \{1,\dots,\chi\}$ with $i\ne j$ and all $u\in V_i$ we have
\begin{align*}
\nu x(u)&=\sum_{v\in N_j(u)} x(v),
\intertext{where $N_j(u)$ is the set of neighbors of $u$ of color $j$. If we fix $i$ in $C$ and $u\in V_i$ and let $j$ run over all other indices in $C$, then we get}
\nu(|C|-1)x(u)&=\sum_{j\in C\setminus \{i\}}\sum_{v\in N_j(u)} x(v)=\sum_{v\in N_H(u)} x(v),
\intertext{so $x | _{V(H)}$ is an eigenvector of $H$ with eigenvalue $\nu(|C|-1)$. This is a positive eigenvector, so its eigenvalue is the largest of $H$. Since $H$ is an induced subgraph of $G$, by Cauchy interlacing (see \thref{cor:indsubgr}) we have $\y_{\min}(H)\ge -\nu$. Looking at the Hoffman bound of $H$, we get}
h(H)&=1-\frac{\y_{\max}(H)}{\y_{\min}(H)}\ge  1+\frac{\nu(|C|-1)}\nu=|C|.
\intertext{The coloring $V(H)=\bigsqcup_{i\in C} V_i$ uses $|C|$ colors, so this must be a Hoffman coloring. Now $\y_{\min}(H)=-\nu=\y_{\min}(G)$ and by Hoffman colorability of $G$ we get}
\y_{\max}(H)&=\nu(|C|-1)=\frac{|C|-1}{\chi-1}\y_{\max}(G),
\end{align*}
which concludes the proof.
\end{proof}

%%%%%%%%%%%%%%%%%%%%%%%%%%%%%%%%%%%%%%%%%%%%%%%%%%%%%%%%%%%%%%%%%%%%%%%%%%%%%%%%%%%%%%%%%%%%%%%%%%%%%%%%%%%%%
\subsection{Some preliminary corollaries of the Decomposition Theorem}
%%%%%%%%%%%%%%%%%%%%%%%%%%%%%%%%%%%%%%%%%%%%%%%%%%%%%%%%%%%%%%%%%%%%%%%%%%%%%%%%%%%%%%%%%%%%%%%%%%%%%%%%%%%%%
Before we look at the consequences of the Decomposition Theorem as set out before, we state some very elementary corollaries that we use throughout.

In the following, a \emph{bipartite part} of a graph $G$ is an induced subgraph on two color classes of a coloring of $G$. If not specified, this coloring is meant to be a Hoffman coloring.

\begin{cor}\thlabel{cor:noiso}
    If $G$ is Hoffman colorable and $H$ is a bipartite part, then $H$ does not have isolated vertices.
\end{cor}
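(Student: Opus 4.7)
The plan is to invoke the Decomposition Theorem (\thref{thm:Decomp}) on the set $C=\{i,j\}$ consisting of the two colors whose union induces $H$. Parts (ii) and (iii) of that theorem tell us that the restriction $x|_{V(H)}$ of the positive eigenvector $x$ of $G$ is itself a positive eigenvector of $H$, belonging to the eigenvalue $\y_{\max}(H)=\frac{1}{\chi-1}\y_{\max}(G)$.

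From here I would argue by contradiction. Suppose some vertex $v\in V(H)$ is isolated in $H$. Reading the eigenvalue equation $A_H(x|_{V(H)})=\y_{\max}(H)\cdot x|_{V(H)}$ at the coordinate $v$ yields
$$\y_{\max}(H)\cdot x(v)=\sum_{u\sim_H v} x(u)=0,$$
because $v$ has no $H$-neighbors. Since $x$ is strictly positive we have $x(v)>0$, so this forces $\y_{\max}(H)=0$, and then by the scaling identity of \thref{thm:Decomp}(ii) also $\y_{\max}(G)=0$.

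The only remaining item is to note that $\y_{\max}(G)>0$, which holds because any graph with chromatic number at least two has at least one edge (the ``non-empty'' hypothesis of \thref{thm:Hoffmanboundchi}), and thus a positive spectral radius. This contradicts $\y_{\max}(G)=0$ and finishes the proof. There is no real obstacle to flag here: the corollary is essentially a one-line consequence of the Decomposition Theorem, once one observes that a positive eigenvector cannot have a zero in the row corresponding to an isolated vertex when the eigenvalue is nonzero.
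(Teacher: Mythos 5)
Your proof is correct and matches the paper's argument: both apply the Decomposition Theorem to the two relevant color classes to obtain a positive eigenvector of $H$ for a positive eigenvalue, and then observe that the eigenvalue equation fails at an isolated vertex. The only cosmetic difference is that you phrase the contradiction via $\y_{\max}(H)=0$ forcing $\y_{\max}(G)=0$, whereas the paper directly notes that $\nu x(v)>0$ cannot equal $0$.
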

\begin{proof}
    Since $G$ is non-empty, $\y_{\max}(G)>0$. By the Decomposition Theorem, $H$ now has a positive eigenvector $x$ for a positive eigenvalue $\nu$. If $v\in V(H)$ is isolated, then $(A(H) \cdot x)(v)=0$, while it should be $\nu x(v)>0$.
\end{proof}
\begin{cor}\thlabel{cor:minimumdegree}
    Let $G$ be Hoffman colorable. Then for every optimal coloring of $G$ every vertex must have at least one neighbor of every other color.
\end{cor}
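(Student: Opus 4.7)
The plan is to reduce the claim directly to \thref{cor:noiso} via the Decomposition Theorem, applied to a two-element subset of colors. Since $G$ is Hoffman colorable, any optimal coloring uses $\chi(G)=h(G)$ colors and is therefore a Hoffman coloring, so the hypotheses of \thref{thm:Decomp} are available.

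Concretely, fix any vertex $v \in V(G)$, let $i$ be its color, and let $j \ne i$ be any other color appearing in the optimal coloring. I would apply \thref{thm:Decomp} with $C = \{i,j\}$, obtaining the bipartite induced subgraph $H = G[V_i \cup V_j]$, which is itself Hoffman colorable (with $2$ colors) by part~(i) of the theorem. Then \thref{cor:noiso} applies to $H$: it has no isolated vertices. In particular $v$ is not isolated in $H$, so $v$ has at least one neighbor in $V_j$. Since $j$ was arbitrary among the other colors, $v$ has a neighbor of every other color.

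There is essentially no obstacle here; the work has already been done in \thref{thm:Decomp} and \thref{cor:noiso}. The only point to mention explicitly is that an optimal coloring of a Hoffman colorable graph is automatically a Hoffman coloring (as $\chi(G)$ equals the Hoffman bound), which is why the two theorems cited apply to every optimal coloring rather than only to a distinguished one.
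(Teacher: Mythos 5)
Your proof is correct and matches the paper's intended argument: the corollary is stated immediately after \thref{cor:noiso} precisely because applying that corollary to the bipartite part $G[V_i\cup V_j]$ (itself Hoffman colorable by the Decomposition Theorem) gives the claim at once. Your explicit remark that every optimal coloring of a Hoffman colorable graph is automatically a Hoffman coloring is the right justification for why this applies to all optimal colorings.
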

This means that connected graphs with a small minimal degree (compared to the chromatic number) cannot have a Hoffman coloring. In particular, connected non-bipartite Hoffman colorable graphs cannot have leaves. We also have the following, which is interesting in the context of cone graphs.
\begin{cor}\thlabel{cor:colorclasssize1}
  Any vertex that constitutes a color class on its own in a Hoffman coloring is a universal vertex, meaning that it is adjacent to every other vertex in the graph.
\end{cor}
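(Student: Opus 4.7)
The statement is an immediate consequence of the material just established. The plan is to reduce it to the ``every color must be represented in the neighborhood'' principle recorded in \thref{cor:minimumdegree}.

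More explicitly, suppose the Hoffman coloring is $V(G)=\bigsqcup_{i=1}^{\chi} V_i$ and that some color class, say $V_i$, consists of a single vertex $v$. I would pick an arbitrary vertex $u\in V(G)\setminus\{v\}$ and let $j\ne i$ be its color, so $u\in V_j$. By \thref{cor:minimumdegree}, applied to the vertex $u$, the vertex $u$ must have at least one neighbor of every color different from $j$; in particular, $u$ has a neighbor of color $i$. Since the only vertex of color $i$ is $v$, this neighbor must be $v$, i.e.\ $u\sim v$. As $u$ was arbitrary, $v$ is adjacent to every other vertex of $G$.

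Alternatively, and self-contained from the Decomposition Theorem itself, one can apply \thref{thm:Decomp} with $C=\{i,j\}$ for each $j\ne i$: the induced bipartite part $H$ on $\{v\}\cup V_j$ is Hoffman colorable and, by \thref{cor:noiso}, contains no isolated vertices; so every vertex of $V_j$ must have a neighbor in the other color class $\{v\}$, forcing adjacency to $v$.

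There is no real obstacle here: the work has already been done in \thref{cor:minimumdegree} (or, further upstream, in the Decomposition Theorem together with \thref{cor:noiso}), and the corollary is just the special case in which one of the color classes degenerates to a single vertex.
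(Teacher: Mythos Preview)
Your argument is correct and matches the paper's intended approach: the corollary is stated immediately after \thref{cor:minimumdegree} with no separate proof, so deducing it from that result (each vertex $u\ne v$ must have a neighbor of color $i$, hence $u\sim v$) is exactly what was meant. The alternative via \thref{cor:noiso} is equally valid and is really the same observation one step further upstream.
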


%%%%%%%%%%%%%%%%%%%%%%%%%%%%%%%%%%%%%%%%%%%%%%%%%%%%%%%%%%%%%%%%%%%%%%%%%%%%%%%%%%%%%%%%%%%%%%%%%%%%%%%%%%%%%
\subsection{Hoffman colorability of cone graphs}
%%%%%%%%%%%%%%%%%%%%%%%%%%%%%%%%%%%%%%%%%%%%%%%%%%%%%%%%%%%%%%%%%%%%%%%%%%%%%%%%%%%%%%%%%%%%%%%%%%%%%%%%%%%%%

We are now ready to completely classify Hoffman colorability of cone graphs. This is important, because of three reasons: (1) several of the smallest Hoffman colorable graphs are cone graphs, (2) this class of graphs is suited very well to showing the power of the Decomposition Theorem, and (3) because of \thref{cor:colorclasssize1} this is not only a classification of Hoffman colorable cone graphs, but also of Hoffman colorable graphs with a color class of size 1.

It turns out that a similar reasoning works for a small generalization (what we decided to call \emph{$k$-cone graphs}), so we include it here as well.

\begin{defi}\thlabel{defi:cone}
The \emph{cone graph} over a graph $G$ is the graph obtained by adjoining a vertex to $G$ and connecting it to every vertex in $G$. For $k\in \N$, we define the \emph{$k$-cone graph} over $G$ as the graph obtained by adding a coclique of size $k$ to $G$ and connecting every vertex of the $k$-coclique to every vertex of $G$.
\end{defi}

\begin{thm}\thlabel{thm:cone}
Let $G$ be a non-empty graph. Then the $k$-cone graph over $G$ is Hoffman colorable if and only if $G$ is a regular Hoffman colorable graph with color classes of size $\y_{\min}(G)^2/k$.
    \end{thm}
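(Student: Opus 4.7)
Write $\hat G$ for the $k$-cone graph over $G$, with vertex set $V(\hat G) = V(G) \sqcup K$ where $K$ is a coclique of size $k$ joined to every vertex of $V(G)$. My plan is to handle the forward direction using \thref{lem:cwr} together with \thref{thm:Decomp}, and to establish the converse by a direct computation of the spectrum of $\hat G$, exploiting that $\{V(G), K\}$ is an equitable partition once $G$ is regular.

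For the forward implication, I would begin from a Hoffman coloring $V_1, \ldots, V_\chi$ of $\hat G$, set $\mu = -\y_{\min}(\hat G)$, and let $x$ denote the Perron vector of $\hat G$. Since each vertex of $K$ is adjacent to all of $V(G)$, every color class is contained either in $K$ or in $V(G)$. If two distinct classes $V_i, V_j$ both sat inside $K$, then for $u \in V_i$ the set $N_j(u)$ would be empty, giving $b^*_{ij} = 0$ and contradicting $b^*_{ij} = \mu > 0$ from \thref{lem:cwr}(ii); so $K$ is a single color class, which I relabel as $V_\chi$. Since $\chi(\hat G) \geq \chi(G) + 1 \geq 3$, the Decomposition Theorem applied with $C = \{1, \ldots, \chi-1\}$ gives that $G$ is Hoffman colorable with $\y_{\min}(G) = -\mu$ and with Perron vector $x|_{V(G)}$. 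The weight-equitability from \thref{lem:cwr}, applied to any $u \in V(G)$ whose neighborhood in $V_\chi$ is all of $K$, yields $\sum_{v \in K} x(v) = \mu x(u)$; hence $x$ is constant on $V(G)$ with some value $a$, and a constant Perron vector forces $G$ to be regular. Then \thref{prop:constantequitable} gives a common class size $s$ for $V_1, \ldots, V_{\chi-1}$, and applying the same weight identity symmetrically, for $u \in K$ and any $i < \chi$ yields $sa = \mu x(u)$; so $x$ is also constant on $K$, with value $sa/\mu$. The norm balance \thref{lem:cwr}(iii) between $V_i$ and $V_\chi$ then reads $sa^2 = k(sa/\mu)^2$, which collapses to $s = \mu^2/k = \y_{\min}(G)^2/k$.

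For the converse, with $d = \chi(G)$, $\mu = -\y_{\min}(G)$, valency $r = (d-1)\mu$ (by regularity and Hoffman's bound), and class size $s = \mu^2/k$, we have $|V(G)| = d\mu^2/k$. The partition $\{V(G), K\}$ is then equitable in $\hat G$ with quotient matrix
\[ B = \begin{pmatrix} r & k \\ d\mu^2/k & 0 \end{pmatrix}, \]
whose characteristic polynomial $\theta^2 - r\theta - d\mu^2$ factors as $(\theta - d\mu)(\theta + \mu)$ (using the identity $(d-1)^2 + 4d = (d+1)^2$). The remaining spectrum of $\hat G$ consists of the non-Perron eigenvalues of $G$ (from eigenvectors of $G$ orthogonal to $\mathbbm{1}_{V(G)}$ extended by zero on $K$) together with $0$ of multiplicity $k-1$ (from $K$-supported vectors orthogonal to $\mathbbm{1}_K$), all of which lie in $[-\mu, d\mu]$. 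Hence $\y_{\max}(\hat G) = d\mu$, $\y_{\min}(\hat G) = -\mu$, and $h(\hat G) = d+1$; this is attained by taking any Hoffman coloring of $G$ and adding $K$ as a new color class, so $\hat G$ is Hoffman colorable.

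The main obstacle I anticipate is the opening step of the forward direction, where I must rule out that $K$ could be split across several color classes; once \thref{lem:cwr}(ii) is available this is essentially one line, but the rest of the forward direction relies on this structural reduction. After that, everything flows in a relatively routine way, with the size identity $s = \mu^2/k$ being the most delicate numerical step, requiring one to use the weight identity from both sides of the cone in order to pin down $x$ on $K$ before invoking the norm balance.
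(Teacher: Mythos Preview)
Your proof is correct, and the converse is essentially identical to the paper's (you phrase it via the quotient matrix of the equitable partition $\{V(G),K\}$, while the paper writes out the same eigenvectors explicitly).

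For the forward direction your route is a genuine variant. The paper simply \emph{chooses} the optimal coloring in which $V_0=K$ (this is automatic since $\chi(\hat G)=\chi(G)+1$), and then observes that each bipartite part $H_{0,i}$ is the complete bipartite graph $K_{k,|V_i|}$; the Decomposition Theorem then reads off $\mu=\sqrt{k|V_i|}$ and the constancy of $x$ on $V_i$ in one stroke, and regularity and the class size $\mu^2/k$ drop out immediately. You instead work with an \emph{arbitrary} Hoffman coloring, first arguing via \thref{lem:cwr}(ii) that $K$ must occupy a single class, and then extracting constancy of $x$ on $V(G)$, equal class sizes, and finally $s=\mu^2/k$ from \thref{lem:cwr}(ii), \thref{prop:constantequitable}, and the norm balance \thref{lem:cwr}(iii) in turn. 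Your argument is a little longer but has the mild bonus of showing that \emph{every} Hoffman coloring of $\hat G$ has $K$ as a single class; the paper's shortcut of looking at the complete bipartite parts $H_{0,i}$ is the more economical way to reach the stated theorem. The ``obstacle'' you flag (ruling out $K$ split across classes) is in fact not needed: one may simply pick the obvious optimal coloring, as the paper does.
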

    \begin{proof}
        Write $CG$ for the $k$-cone graph over $G$. Note that $\chi(CG)=1+\chi(G)\ge 3$, and that $CG$ is connected.
        
        Suppose that $CG$ is Hoffman colorable. Consider the optimal coloring $V(CG)=\bigsqcup_{i=0}^{\chi(G)} V_i$ where $V_0$ is the $k$-coclique added by the cone construction, and write $\nu=-\y_{\min}(CG)$. Let $x$ be the Perron eigenvector of $CG$. Consider for $1\le i \le \chi(G)$ the bipartite part $H_{0,i}$ on color classes with index 0 and $i$ of $CG$. By construction this is a complete bipartite graph, so $\nu=\sqrt {k |V_i|}$ by \cite[Section 1.4.2]{spectra} and the Decomposition Theorem, and $x$ must be constant on $V_i$. Note that all color classes must now be of the same size, namely $\nu^2/k$, and furthermore $x$ must be constant on $V(G)$. By the Decomposition Theorem, $G$ now has a constant eigenvector, implying that $G$ is regular.

        Conversely, suppose $G$ is regular and Hoffman colorable with all color classes of size $\nu^2/k$ (where $\nu=-\y_{\min}(G)$). Every eigenvector of $G$ orthogonal to the constant vector can be extended to an eigenvector of $CG$ with the same eigenvalue by setting 0 on the vertices in $V_0$. If $(r_i)_{i=1}^k$ is a sequence of real numbers adding to zero, then assigning $r_i$ to the $i$'th vertex of $V_0$ and 0 everywhere else results in an eigenvector of eigenvalue 0. Lastly, assigning $\nu$ to every vertex of $G$ and $\nu^2/k$ to $V_0$ gives an eigenvector of eigenvalue $\chi(G)\nu$ and the vector assigning $\nu$ to every vertex of $G$ and $-\chi(G)\nu^2/k$ to $V_0$ gives an eigenvector of eigenvalue $-\nu$. Therefore we have
        $$\Spec(CG)=\{\chi(G)\nu, 0^{k-1},-\nu\} \cup \Spec(G) \setminus \{(\chi(G)-1)\nu\}.$$
        The least eigenvalue of $G$ is $-\nu$, so we obtain that $CG$ is Hoffman colorable.
    \end{proof}
Alternatively, the ``conversely''-part can be proved using the Composition Theorem (\thref{thm:Comp}).

If we choose $\nu=1$, then this construction leads to all the complete graphs. For some more examples, we can take $G$ in \thref{thm:cone} to be bipartite, so that Hoffman colorability of $G$ is unconditional. If we write $\nu$ for the valency of $G$, then the condition of \thref{thm:cone} is that $G$ has order $2\nu^2/k$. For example, if $G$ is a regular bipartite graph on eight vertices with valency 2, then the cone over $G$ is Hoffman colorable (see Figure \ref{fig:conegraphs}). This can be done for every choice of $\nu$ and $k$ such that $\nu^2/k$ is an integer, providing infinitely many non-trivial Hoffman colorable $k$-cone graphs.

\begin{figure}[ht]
    \begin{center}
\begin{tikzpicture}[scale=0.8]
        \draw[gray, thick] (1,0) -- (0.7,0.7) -- (0,1) -- (-0.7,0.7) -- (-1,0) -- (-0.7,-0.7) -- (0,-1) -- (0.7,-0.7) -- (1,0) -- (-1,0);
        \draw[gray, thick] (0,1) -- (0,-1);
        \draw[gray, thick] (0.7,0.7) -- (-0.7,-0.7);
        \draw[gray, thick] (-0.7,0.7) -- (0.7,-0.7);
        \filldraw[green] (0,0) circle (2pt);
        \filldraw[red] (-1,0) circle (2pt);
        \filldraw[red] (1,0) circle (2pt);
        \filldraw[red] (0,1) circle (2pt);
        \filldraw[red] (0,-1) circle (2pt);
        \filldraw[blue] (0.7,0.7) circle (2pt);
        \filldraw[blue] (-0.7,0.7) circle (2pt);
        \filldraw[blue] (-0.7,-0.7) circle (2pt);
        \filldraw[blue] (0.7,-0.7) circle (2pt);
        
        \draw[gray, thick] (4,-0.5) -- (4,0.5) -- (3,0.5) -- (3,-0.5) -- (4,-0.5) -- (6,0.5) -- (7,0.5) -- (7,-0.5) -- (6,-0.5) -- (6,0.5);
        \draw[gray, thick] (4,0.5) -- (6,-0.5);
        \draw[gray, thick] (3,0.5) .. controls (4,1.5) and (4.5,1) .. (5,0) .. controls (5.5,1) and (6,1.5) .. (7,0.5);
        \draw[gray, thick] (3,-0.5) .. controls (4,-1.5) and (4.5,-1) .. (5,0) .. controls (5.5,-1) and (6,-1.5) .. (7,-0.5);
        \filldraw[green] (5,0) circle (2pt);
        \filldraw[red] (3,0.5) circle (2pt);
        \filldraw[blue] (4,0.5) circle (2pt);
        \filldraw[red] (4,-0.5) circle (2pt);
        \filldraw[blue] (3,-0.5) circle (2pt);
        \filldraw[red] (7,0.5) circle (2pt);
        \filldraw[blue] (6,0.5) circle (2pt);
        \filldraw[red] (6,-0.5) circle (2pt);
        \filldraw[blue] (7,-0.5) circle (2pt);
    \end{tikzpicture}
    \end{center}
    \caption{The cone graphs over the 8-cycle and the disjoint union of two 4-cycles.}
    \label{fig:conegraphs}
\end{figure}

%%%%%%%%%%%%%%%%%%%%%%%%%%%%%%%%%%%%%%%%%%%%%%%%%%%%%%%%%%%%%%%%%%%%%%%%%%%%%%%%%%%%%%%%%%%%%%%%%%%%%%%%%%%%%
\subsection{Hoffman colorability of line graphs}
%%%%%%%%%%%%%%%%%%%%%%%%%%%%%%%%%%%%%%%%%%%%%%%%%%%%%%%%%%%%%%%%%%%%%%%%%%%%%%%%%%%%%%%%%%%%%%%%%%%%%%%%%%%%%

Another subclass of graphs where the Decomposition Theorem allows us to completely classify Hoffman colorability, is the class of connected line graphs.

If $G$ is a graph, then we define the \emph{line graph} $L(G)$ such that $V(L(G))=E(G)$, and two edges are adjacent in the line graph if they share a vertex. The line graph $L(G)$ is connected if and only if $G$ is connected (apart from isolated vertices). Write $N$ for the incidence matrix of $G$, so that $N_{v,e}=1$ if vertex $v$ lies on edge $e$ and 0 otherwise. We have the following proposition.
\begin{prop}[{\cite[Proposition 1.4.1]{spectra}}]\thlabel{prop:linegraphleastev}
    Suppose $G$ has $m$ edges, and let $\rho_1 \ge \dots \ge \rho_r$ be the positive eigenvalues of $N N^T$. Then the eigenvalues of $L(G)$ are $\y_i=\rho_i-2$ for $i=1,\dots, r$ and $\y_i=-2$ for $i=r+1,\dots, m$.
\end{prop}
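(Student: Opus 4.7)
The plan is to reduce the statement to two standard facts: the identity relating $N^TN$ to the adjacency matrix of $L(G)$, and the fact that $NN^T$ and $N^TN$ share their nonzero spectra.

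First, I would verify the identity $N^T N = A(L(G)) + 2I$. For edges $e,f$ of $G$, the entry $(N^TN)_{e,f} = \sum_{v\in V(G)} N_{v,e}N_{v,f}$ counts the vertices incident to both $e$ and $f$. If $e = f$, this count is $2$, accounting for the $2I$ term. If $e \ne f$, the count is $1$ when $e$ and $f$ share a vertex (i.e., are adjacent in $L(G)$) and $0$ otherwise; this matches $A(L(G))_{e,f}$. So $A(L(G)) = N^TN - 2I$, and in particular the eigenvalues of $L(G)$ are obtained by subtracting $2$ from those of $N^TN$ (as $m\times m$ matrices).

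Second, I would invoke the standard linear algebra fact that for any real matrix $N$, the matrices $NN^T$ and $N^TN$ have identical nonzero eigenvalues with matching multiplicities (one can see this by singular value decomposition, or directly by noting that if $NN^Tx = \rho x$ with $\rho \ne 0$, then $N^T x$ is a nonzero eigenvector of $N^TN$ with the same eigenvalue $\rho$, and vice versa). Applying this to the incidence matrix $N$ (an $n\times m$ matrix), the positive eigenvalues $\rho_1 \ge \dots \ge \rho_r$ of $NN^T$ are also eigenvalues of $N^TN$, and the remaining $m - r$ eigenvalues of $N^TN$ must be $0$ (since $N^TN$ is positive semidefinite and its spectrum, together with extra zeros, agrees with that of $NN^T$).

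Combining these, the $m$ eigenvalues of $A(L(G)) = N^TN - 2I$ are $\rho_i - 2$ for $i = 1,\dots,r$ and $-2$ with multiplicity $m - r$, which is exactly the claim. No step here appears truly delicate; the only point requiring a small observation is justifying that $N^TN$ has exactly $m - r$ zero eigenvalues, which follows from positive semidefiniteness together with the coincidence of nonzero spectra between $NN^T$ and $N^TN$.
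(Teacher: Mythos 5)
Your proof is correct and is exactly the standard argument: the paper itself gives no proof (it cites Brouwer--Haemers, Proposition 1.4.1), and the proof there is precisely your combination of the identity $N^TN = A(L(G)) + 2I$ with the fact that $NN^T$ and $N^TN$ share their nonzero spectra. Nothing to add.
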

In particular, every eigenvalue of a line graph is at least $-2$.

The chromatic number of the line graph of $G$ is called the \emph{edge chromatic number} or the \emph{chromatic index} of $G$. By Vizing's famous result in \cite{Vizing}, the chromatic index of a graph is either $\D$ or $\D+1$, where $\D$ is the maximum degree. A graph where the chromatic index is equal to $\D$ is called \emph{class 1} and a graph with chromatic index equal to $\D+1$ is \emph{class 2}. Examples of class 1 graphs are $K_{2m}$ and $K_{m,m}$. Examples of class 2 graphs are regular graphs of odd order, including complete graphs $K_{2m+1}$.

A \emph{1-factor} or \emph{perfect matching} of a graph is a set of edges such that every vertex is on exactly one of those edges. A \emph{1-factorization} of a graph is a partition of the edges into 1-factors. A graph is \emph{1-factorable} if it admits a 1-factorization. Those graphs are necessarily regular, and have an even number of vertices. A 1-factorization is necessarily a coloring of the edges with just $\D$ colors. Therefore 1-factorable graphs are of class 1. Conversely, every regular graph of class 1 must be 1-factorable.

In case the least eigenvalue is equal to $-2$, we can determine Hoffman colorability. In fact, we see that we only get 1-factorable graphs in this case.
\begin{prop}[Hoffman colorability of line graphs with least eigenvalue $-2$]\thlabel{prop:1factor}
    Let $G$ be a connected graph with at least two edges, such that $\y_{\min}(L(G))=-2$. Then $L(G)$ is Hoffman colorable if and only if $G$ is 1-factorable.
\end{prop}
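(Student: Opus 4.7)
The plan is to convert Hoffman colorability of $L(G)$ into a spectral identity that, combined with the standard upper bounds on eigenvalues of line graphs and with Vizing's theorem, pins down the structure of $G$.

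First I would note that $\chi(L(G))=\chi'(G)$ by definition, and that under the hypothesis $\y_{\min}(L(G))=-2$ Hoffman colorability is the single equality
\[
\chi'(G)\;=\;1-\frac{\y_{\max}(L(G))}{-2}\;=\;1+\frac{\y_{\max}(L(G))}{2},
\]
equivalently $\y_{\max}(L(G))=2(\chi'(G)-1)$. So the whole question becomes: when is $\y_{\max}(L(G))$ as large as $2(\chi'(G)-1)$?

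For the forward direction, I would exploit the elementary degree bound for the line graph: every vertex of $L(G)$ corresponding to an edge $uv\in E(G)$ has degree $\deg u+\deg v-2$, so $\Delta(L(G))\le 2\Delta(G)-2$, with equality only if $G$ has an edge between two vertices of maximum degree. Combining $\y_{\max}(L(G))\le\Delta(L(G))\le 2\Delta(G)-2$ with the identity from the previous paragraph gives $\chi'(G)\le\Delta(G)$, which together with Vizing's lower bound $\chi'(G)\ge\Delta(G)$ forces $\chi'(G)=\Delta(G)$, i.e.\ $G$ is of class 1. Moreover we get equality $\y_{\max}(L(G))=\Delta(L(G))=2\Delta(G)-2$. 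Since $G$ is connected with at least two edges, $L(G)$ is connected, and the equality $\y_{\max}=\Delta$ on a connected graph forces regularity (by Perron--Frobenius). Hence every edge $uv$ of $G$ satisfies $\deg u+\deg v=2\Delta(G)$, so $\deg u=\deg v=\Delta(G)$; connectedness then propagates this and makes $G$ itself $\Delta$-regular. A regular class-1 graph is 1-factorable, so $G$ is 1-factorable.

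The converse is a short direct computation. If $G$ is 1-factorable, then $G$ is $k$-regular for some $k\ge 1$ with $\chi'(G)=k$. Then $L(G)$ is regular of degree $2k-2$, so $\y_{\max}(L(G))=2k-2$, and under the hypothesis $\y_{\min}(L(G))=-2$ Hoffman's bound gives $h(L(G))=1+(2k-2)/2=k=\chi(L(G))$, so $L(G)$ is Hoffman colorable.

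The only place where care is needed is the step ``$\y_{\max}(L(G))=\Delta(L(G))\Rightarrow G$ is regular'': one must verify that $L(G)$ is connected (which follows from connectedness of $G$ together with the assumption of at least two edges) before invoking the Perron--Frobenius characterisation of regularity. All other steps are bookkeeping with Vizing's theorem and the formula $\deg_{L(G)}(uv)=\deg u+\deg v-2$.
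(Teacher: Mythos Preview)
Your proof is correct and follows essentially the same approach as the paper: bound $\y_{\max}(L(G))\le 2\Delta(G)-2$, sandwich $h(L(G))\le\Delta(G)\le\chi'(G)$ via Vizing, and identify equality with regularity plus class~1, i.e.\ 1-factorability. The only cosmetic difference is that the paper derives the eigenvalue bound through the signless Laplacian $Q=D+A$ (so that $\y_{\max}(L(G))=\y_{\max}(Q)-2\le\y_{\max}(D)+\y_{\max}(A)-2$) and reads off regularity of $G$ directly from the equality case $\y_{\max}(A)=\Delta$, whereas you use the line-graph degree formula $\deg_{L(G)}(uv)=\deg u+\deg v-2$ and deduce regularity of $G$ via regularity of $L(G)$.
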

\begin{proof}
    Write $Q$ for the signless Laplace matrix of $G$, so $Q=NN^T$. We have $Q=D+A$, where $D$ is the diagonal matrix recording the degrees of the vertices of $G$, and $A$ is the adjacency matrix of $G$. We get $\y_{\max}(D)=\D$, the maximum degree, and by \cite[Proposition 3.1.2]{spectra} we have $\y_{\max}(A)\le \D$ with equality if and only if $G$ is regular. By linearity of the Rayleigh quotient (\cite[Section 2.4]{spectra}), we have $\y_{\max}(Q)\le \y_{\max}(D)+\y_{\max}(A)$. By \cite[Proposition 1.4.1]{spectra}
    $$\y_{\max}(L(G))=\y_{\max}(Q)-2\le \y_{\max}(D)+\y_{\max}(A)-2 \le 2\D-2,$$
    and so $h(L(G))\le \D$. By Vizing's Theorem, $\chi(L(G))\in \{\D,\D+1\}$. So we have
    $$h(L(G))\le \D \le \chi(L(G)).$$
    Now it is evident that $L(G)$ is Hoffman colorable if and only if both inequalities are equalities. The first equality is equivalent to $G$ being regular. The second equality is equivalent to $G$ being of class 1. Together, regularity and class 1 are equivalent to 1-factorability, concluding the proof.
\end{proof}
Now we have classified Hoffman colorablility of line graphs with least eigenvalue equal to $-2$, we should investigate which line graphs have this property. If a graph has more edges than vertices, then $r<m$ (with $r$ and $m$ taken from \thref{prop:linegraphleastev}) and so we must have an eigenvalue $-2$. If $G$ is connected and has an equal number of vertices and edges, then $G$ has a unique cycle. If this cycle is of even length, then consider the vector $x:E(G)\to \R$ assigning 0 to edges not in the cycle, and alternatingly $1$ and $-1$ on the edges in the cycle. This vector is an eigenvector of $L(G)$ and the eigenvalue is $-2$, so in this case \thref{prop:1factor} applies as well.

For the remaining cases (assuming that $G$ is connected), where $G$ is a tree, and where $G$ has a unique cycle of odd length, we need the Decomposition Theorem. Before we can solve these last cases, we state a lemma. It applies to line graphs, but we state it in the most general way possible.
\begin{lem}\thlabel{cor:LineGraphComponent}
    Let $G$ be Hoffman colorable. Suppose that $G$ has an optimal coloring $V(G)=\bigsqcup_{i=1}^\chi V_i$ such that every vertex is adjacent to at most two vertices of every color. Let $C=\{j,j'\}$ be a pair of colors: $C\subseteq \{1,\dots, \chi\}$ with $|C|=2$. Let $H$ be the induced subgraph of $G$ on $V_i \cup V_j$. Let $K$ be a connected component of $H$. Then exactly one of the following holds.
    \begin{enumerate}[label=(\roman*)]
        \item $\y_{\min}(G)=-2$ and $K$ is an even cycle;
        \item There exists a positive integer $m$ such that $\y_{\min}(G)=-2\cos \Big ( \frac\pi{m+1}\Big )$ and $K$ is a path on $m$ vertices.
    \end{enumerate}
\end{lem}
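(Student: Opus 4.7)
The plan is to apply the Decomposition Theorem to $C=\{j,j'\}$, use the resulting structural and spectral information to classify the connected components of $H$, and then read off $\y_{\min}(G)$ from the known spectrum of each component.

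First, \thref{thm:Decomp} applied to $C=\{j,j'\}$ shows that $H$ is bipartite and Hoffman colorable with $\y_{\min}(H)=\y_{\min}(G)$, and that the restriction to $V(H)$ of the positive eigenvector of $G$ is a positive eigenvector of $H$ with eigenvalue $\y_{\max}(H)$. Second, by the bounded-degree hypothesis every vertex of $H$ has degree at most two, and by \thref{cor:noiso} $H$ has no isolated vertices; so every connected component of $H$ is either a cycle or a path on at least two vertices, and bipartiteness forces any cycle to be even. Thus the only structural candidates for $K$ are an even cycle $C_{2k}$ and a path $P_m$ with $m\ge 2$, which are exactly the two options in the statement.

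Third, I would transfer the spectral data from $H$ down to $K$. Restricting the positive eigenvector of $H$ to $V(K)$ produces a nonzero positive vector that is still an eigenvector because $K$ is a connected component; its eigenvalue is $\y_{\max}(H)$, and by Perron-Frobenius this must also equal $\y_{\max}(K)$. Combining this with the bipartiteness of both $K$ and $H$ (so that both spectra are symmetric about $0$) and with the Decomposition Theorem gives
\[\y_{\min}(K)=-\y_{\max}(K)=-\y_{\max}(H)=\y_{\min}(H)=\y_{\min}(G).\]

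Finally, substituting the well-known least eigenvalues $\y_{\min}(C_{2k})=-2$ and $\y_{\min}(P_m)=-2\cos\bigl(\pi/(m+1)\bigr)$ produces cases (i) and (ii), respectively; these are mutually exclusive because $-2\cos(\pi/(m+1))>-2$ for every positive integer $m$. The main (mild) obstacle is the third step: when $H$ is disconnected one must argue that every component shares the same Perron eigenvalue, which is precisely what the positivity of the eigenvector supplied by the Decomposition Theorem delivers.
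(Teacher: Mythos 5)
Your proof is correct and follows essentially the same route as the paper: apply the Decomposition Theorem to the pair of colors, use the degree bound and bipartiteness to see that each component is an even cycle or a path, and read off $\y_{\min}$ from the known spectra of cycles and paths. Your third step usefully makes explicit what the paper compresses into ``by bipartiteness $\y_{\min}(H)=\y_{\min}(K)$'', namely that the positive eigenvector supplied by the Decomposition Theorem forces every component of $H$ to share the same Perron eigenvalue.
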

\begin{proof}
    By the assumption on the coloring, $K$ has maximum degree 2. Then $K$ has to be a cycle or a path. Since $K$ is bipartite, if $K$ is a cycle it has to be of even length.
    
    From the Decomposition Theorem we know $\y_{\min}(G)=\y_{\min}(H)$, and by bipartiteness we know $\y_{\min}(H)=\y_{\min}(K)$. Now the only options for $\y_{\min}(G)$ are $-2$ and $-2\cos \Big (\frac \pi {m+1} \Big )$ for some $m$ (see \cite[Section 1.4.3, Section 1.4.4]{spectra}), respectively. This concludes the proof.
\end{proof}
The power of this result is that \emph{all} of the connected components for \emph{all} possible choices of pairs of colors for \emph{all} optimal colorings satisfying the requirement now are cycles, or paths on a certain fixed number of vertices, as the least eigenvalue of $G$ does not depend on the coloring or the component. The least eigenvalue of $G$ determines whether $K$ is a cycle or a path, and if it is a path, the length of the path.

In case $G$ is a tree or has an equal number of vertices and edges and the unique cycle is of odd length, then if $L(G)$ is Hoffman colorable the case $\y_{\min}(L(G))=-2$ is impossible because a cycle of alternating colors in $L(G)$ gives an even cycle in $G$, which does not exist.

With this in mind, we call a bipartite part $H$ of $G$ given some optimal edge coloring of $G$ such that $H$ is a path graph a \emph{maximal alternating path} of $G$, which we will abbreviate as \emph{MAP}. For an example of an MAP, see Figure \ref{fig:MAP}.

\begin{figure}[ht]
    \begin{center}
        \begin{tikzpicture}[scale=0.4]
            \coordinate (1) at (0,0);
            \coordinate (2) at (2,0);
            \coordinate (3) at (-1,1.73);
            \coordinate (4) at (-1,-1.73);
            \coordinate (5) at (-3,1.73);
            \coordinate (6) at (0,3.46);
            \coordinate (7) at (3,-1.73);
            \coordinate (8) at (-4,0);
            \coordinate (9) at (2,3.46);
            \coordinate (10) at (-4,3.46);
            \coordinate (1a) at (10,0);
            \coordinate (2a) at (12,0);
            \coordinate (3a) at (9,1.73);
            \coordinate (4a) at (9,-1.73);
            \coordinate (5a) at (7,1.73);
            \coordinate (6a) at (10,3.46);
            \coordinate (7a) at (13,-1.73);
            \coordinate (8a) at (6,0);
            \coordinate (9a) at (12,3.46);
            \coordinate (1b) at (14,0);
            \coordinate (2b) at (16,0);
            \coordinate (3b) at (13,1.73);
            \coordinate (4b) at (13,-1.73);
            \coordinate (5b) at (11,1.73);
            \coordinate (6b) at (14,3.46);
            \coordinate (7b) at (17,-1.73);
            \coordinate (8b) at (10,0);
            \coordinate (9b) at (16,3.46);

            \draw[red, thick] (1) -- (3);
            \draw[red, thick] (5) -- (8);
            \draw[blue, thick] (1) -- (4);
            \draw[blue, thick] (5) -- (3);
            \draw[blue, thick] (6) -- (9);
            \draw[blue, thick] (2) -- (7);
            \draw[green, thick] (6) -- (3);
            \draw[green, thick] (1) -- (2);
            \draw[green, thick] (5) -- (10);
            \draw[black, thick] (8a) -- (5a) -- (3a) -- (1a) -- (4a);
            \draw[black, thick] (6b) -- (3b) -- (1b) -- (2b);

            \filldraw[black] (1) circle (4pt) node[anchor=east]{1};
            \filldraw[black] (2) circle (4pt) node[anchor=west]{2};
            \filldraw[black] (3) circle (4pt) node[anchor=west]{3};
            \filldraw[black] (4) circle (4pt) node[anchor=east]{4};
            \filldraw[black] (5) circle (4pt) node[anchor=east]{5};
            \filldraw[black] (6) circle (4pt) node[anchor=east]{6};
            \filldraw[black] (7) circle (4pt) node[anchor=west]{7};
            \filldraw[black] (8) circle (4pt) node[anchor=east]{8};
            \filldraw[black] (9) circle (4pt) node[anchor=west]{9};
            \filldraw[black] (10) circle (4pt) node[anchor=east]{10};
            \filldraw[black] (1a) circle (4pt) node[anchor=west]{1};
            \filldraw[black] (3a) circle (4pt) node[anchor=west]{3};
            \filldraw[black] (4a) circle (4pt) node[anchor=west]{4};
            \filldraw[black] (5a) circle (4pt) node[anchor=east]{5};
            \filldraw[black] (8a) circle (4pt) node[anchor=east]{8};
            \filldraw[black] (1b) circle (4pt) node[anchor=east]{1};
            \filldraw[black] (2b) circle (4pt) node[anchor=west]{2};
            \filldraw[black] (3b) circle (4pt) node[anchor=east]{3};
            \filldraw[black] (6b) circle (4pt) node[anchor=east]{6};
        \end{tikzpicture}
    \end{center}
    \caption{Two maximal alternating paths in a tree.}
    \label{fig:MAP}
\end{figure}

What \thref{cor:LineGraphComponent} implies, is that every MAP of a tree/graph with a unique odd cycle $G$ with a Hoffman colorable line graph is of the same length, which is specified by the least eigenvalue of the line graph of $G$. The line graph of the tree from Figure \ref{fig:MAP} is therefore not Hoffman colorable.

\begin{thm}\thlabel{thm:linegraphs}
    Let $G$ be a connected graph with at least two edges. Then $L(G)$ is Hoffman colorable if and only if one of the following statements holds.
    \begin{enumerate}[label=(\roman*)]
        \item $G$ is 1-factorable;
        \item $G\cong K_{1,m}$ for some $m$;
        \item $G$ is a path graph;
        \item $G\cong K_3$;
        \item $G$ is the graph from Figure \ref{fig:K3withleaves}.
        
        \begin{figure}[ht]
        \begin{center}
        \begin{tikzpicture}[scale=0.4]
\coordinate (1) at (0,2);
\coordinate (2) at (1.73,-1);
\coordinate (3) at (-1.73,-1);
\coordinate (4) at (-0.87,-0.5);
\coordinate (5) at (0.87,-0.5);
\coordinate (6) at (0,1);
\draw[green,thick] (1) -- (6);
\draw[blue,thick] (2) -- (5);
\draw[red,thick] (3) -- (4);
\draw[green,thick] (4) -- (5);
\draw[blue,thick] (4) -- (6);
\draw[red,thick] (5) -- (6);
\filldraw[gray] (1) circle (4pt);
\filldraw[gray] (2) circle (4pt);
\filldraw[gray] (3) circle (4pt);
\filldraw[gray] (4) circle (4pt);
\filldraw[gray] (5) circle (4pt);
\filldraw[gray] (6) circle (4pt);
\end{tikzpicture}             
        \end{center}
        \caption{A sporadic Hoffman edge coloring.}
        \label{fig:K3withleaves}
        \end{figure}
    \end{enumerate}
\end{thm}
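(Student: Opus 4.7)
The plan is to handle both directions separately. The reverse direction I would settle case-by-case: for (i), any connected 1-factorable graph with at least two edges satisfies $\y_{\min}(L(G))=-2$ (either it has more edges than vertices and \thref{prop:linegraphleastev} forces $-2$, or it is a 2-regular 1-factorable connected graph, hence an even cycle, which also has $-2$), and then \thref{prop:1factor} concludes. Cases (ii)–(iv) reduce to the standard observations $L(K_{1,m})=K_m$, $L(P_n)=P_{n-1}$, and $L(K_3)=K_3$, all Hoffman colorable. For (v), I would compute the spectrum of the line graph of the net graph by exploiting its three-fold symmetry, finding extremal eigenvalues $1+\sqrt{5}$ and $-(1+\sqrt{5})/2$, which gives $h(L(G))=3=\chi(L(G))$, together with the explicit optimal coloring displayed in Figure \ref{fig:K3withleaves}.

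For the forward direction, I would split on $\y_{\min}(L(G))$. If $\y_{\min}(L(G))=-2$, \thref{prop:1factor} gives case (i). Otherwise, combining \thref{prop:linegraphleastev} with the observations preceding \thref{cor:LineGraphComponent} (more edges than vertices forces $-2$ to appear, and an even cycle provides an explicit $-2$-eigenvector of $L(G)$), $G$ must be either a tree or a connected graph with a unique cycle of odd length. Since each vertex of $L(G)$ has at most one neighbor of each color at each of the two endpoints of the corresponding edge, \thref{cor:LineGraphComponent} applies in its path alternative, producing an integer $m$ — determined only by $\y_{\min}(L(G))$ — such that every MAP in every optimal edge coloring of $G$ is a path on $m$ vertices.

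It remains to analyze the structure of $G$. If $G$ is a tree that is neither a star nor a path, it contains a vertex $v$ of degree $\ge 3$ having a non-leaf neighbor $u$; I would exhibit an explicit proper edge coloring in which a Kempe chain through $vu$ extends past $u$ (yielding a MAP of length $\ge 3$) while another pair of colors at $v$ produces a MAP of length $2$ (for instance, via two edges at $v$ whose other endpoints are either leaves or carry no compatible color), contradicting uniform length and leaving only stars (ii) and paths (iii). If $G$ is unicyclic with odd cycle $C_{2k+1}$ and $k\ge 2$, the canonical 3-edge-coloring $1,2,1,2,\dots,1,2,3$ of the cycle exhibits a long $\{1,2\}$-alternating arc coexisting with a short MAP at the transition to color $3$, again contradicting uniformity; so $k=1$ and the cycle is a triangle. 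If $G=K_3$ we are in case (iv), and otherwise $G$ is a triangle with tree attachments, which I would show is forced to be the net of (v). The main obstacle will be this last step: ruling out every non-net attachment pattern requires care, since the attachments form an infinite family parameterized by three rooted trees, and a witness coloring must be constructed for each candidate. I expect the cleanest route is to assign the three triangle edges three distinct colors and extend greedily outward, so that MAP lengths reflect the depths of the attachments; any attachment longer than a single pendant, or any bare triangle vertex alongside at least one pendant elsewhere, then forces two MAPs of distinct lengths, completing the classification.
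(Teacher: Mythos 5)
Your overall architecture matches the paper's: dispose of $\lambda_{\min}(L(G))=-2$ via \thref{prop:1factor}, reduce the rest to trees and unicyclic graphs with an odd cycle, invoke \thref{cor:LineGraphComponent} to force all MAPs to have a common length, and then do a structural case analysis. The reverse direction is fine. The gap is in how you pin down the structure before hunting for witness colorings. The paper's analysis runs through \thref{cor:minimumdegree}: since every edge of $G$ must meet edges of all $\Delta-1$ other colors and a MAP has at least two edges, the neighbor $v$ of any leaf $\ell$ is forced to have degree $\Delta$; if $v$ has a second leaf neighbor then $k=2$ and $G$ is a star, and otherwise each vertex meets at most one leaf, so the induced subgraph on non-leaves has minimum degree $2$ and must therefore be the unique cycle. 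This kills all trees with $\Delta\ge 3$ except stars \emph{without constructing any coloring}, forces $\Delta=3$, and reduces the unicyclic case to ``odd cycle plus single pendant edges,'' a finite and easily checked family once the cycle length is bounded.

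Your plan skips this step, and both halves of your case analysis suffer for it. For trees, the claimed length-$2$ MAP at a vertex $v$ of degree $\ge 3$ is not available in general: if $v$ has at most one leaf neighbor and its non-leaf neighbors all have degree $\Delta$, you cannot avoid placing a ``compatible'' color at the far endpoints, so the chain you want to terminate after two edges keeps going; making this work requires exactly the kind of careful construction you defer. For the unicyclic case you are left with a triangle carrying \emph{arbitrary} rooted-tree attachments, an infinite family for which you admit you would need a witness coloring in every case; moreover your canonical cycle coloring $1,2,\dots,1,2,3$ no longer controls MAP lengths once attachments of depth $\ge 2$ can extend the alternating arcs at either end. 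Both difficulties evaporate once you first apply \thref{cor:minimumdegree} and the $k\ge 3$ dichotomy to show that only pendant edges can hang off the cycle. As written, the proposal identifies the right target but leaves the decisive reduction unproved.
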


\begin{proof}
    Suppose that $G$ is a connected graph with at least two edges. The case where the least eigenvalue of the line graph is $-2$ is covered before, which leads to the 1-factorable graphs. So suppose that $G$ is a tree, or has a unique cycle, of odd length.

    First of all, note that the graphs from the list are all Hoffman colorable: the line graph of $K_{1,m}$ or $K_3$ is a complete graph so Hoffman colorable, the line graph of a path is still a path (Hoffman colorable by bipartiteness), and the line graph of the graph from Figure \ref{fig:K3withleaves} has largest eigenvalue $2\phi$ and least eigenvalue $-\phi$, where $\phi$ is the golden ratio.

    For the converse, assume that the line graph of $G$ is Hoffman colorable. Since $G$ has at least two edges and is connected, $\D(G)\ge 2$. If $\D(G)=2$, then $G$ must be a path or an odd cycle. Odd cycles of length at least 5 are not Hoffman colorable.

    For the remainder, assume that $\D(G)\ge 3$. Then note that for trees, starting with any root vertex, and coloring the edges in any available way in ascending order of distance of the root vertex, we get a valid $\D$-coloring of the edges. Furthermore, for the graphs with a unique odd cycle, we can color the cycle in any way we like using $\D\ge 3$ colors, and then color the remaining edges in any available way in ascending order of distance to the cycle, to get a valid $\D$-coloring. This shows that $\chi(L(G))=\D(G)$.

    Let $k$ be the length of the MAPs in $G$. Notice that we know that $k\ge 2$. Since $\D\ge 3$, there must be a leaf $\ell$ in $G$. Write $v$ for the neighbor of $\ell$. Since $k\ge 2$, $\{\ell,v\}$ cannot form an MAP. So, for every optimal edge coloring of $G$, each of the $\D(G)-1$ colors that are not the color of $\{\ell,v\}$ must be assigned to one edge from $v$, so that $\deg(v)=\D$.
    
    Suppose that $v$ is adjacent to another leaf $\ell'$. Then $(\ell,v,\ell')$ is an MAP, so $k=2$, which implies that $G$ is isomorphic to $K_{1,\D}$.

    Otherwise, if $v$ is adjacent to only one leaf, then $k\ge 3$. Now any vertex can only be adjacent to at most one leaf, and if a vertex is, then it must be of maximum degree $\D\ge 3$. The induced subgraph of $G$ on the set of non-leaves must now have minimum degree 2, and so it must be a cycle (since only one cycle can exist in the graph). Now we also know $\D=3$. So $G$ is an odd cycle graph, with one leaf added to each member of some non-empty subset of the vertices of the cycle.

    Suppose the cycle is of length at least 5, then color $G$ by using one color (say red) on one of the edges of the cycle, and two other colors (say green and blue) alternatingly on the remainder of the cycle. Then, there is a green-blue alternating path of length at least 4 (so $k\ge 4$), while there also exists a red-blue maximal alternating path of length at most 3 (the one from any blue-colored edge that does not meet the red edge on the cycle), which is a contradiction.

    So, the cycle must be of length 3. Attaching a leaf to just one or two of the vertices leads to an MAP of length 3 and a different MAP of length 4, which is a contradiction. The only possibility left is that $k=4$ and that $G$ is the graph from Figure \ref{fig:K3withleaves}, which concludes the proof.
\end{proof}

%%%%%%%%%%%%%%%%%%%%%%%%%%%%%%%%%%%%%%%%%%%%%%%%%%%%%%%%%%%%%%%%%%%%%%%%%%%%%%%%%%%%%%%%%%%%%%%%%%%%%%%%%%%%%
\section{The Composition Theorem}\label{sec:comp}
%%%%%%%%%%%%%%%%%%%%%%%%%%%%%%%%%%%%%%%%%%%%%%%%%%%%%%%%%%%%%%%%%%%%%%%%%%%%%%%%%%%%%%%%%%%%%%%%%%%%%%%%%%%%%
In this section, we introduce the Composition Theorem, and present an application of it to regular graphs of high enough valency.

%%%%%%%%%%%%%%%%%%%%%%%%%%%%%%%%%%%%%%%%%%%%%%%%%%%%%%%%%%%%%%%%%%%%%%%%%%%%%%%%%%%%%%%%%%%%%%%%%%%%%%%%%%%%%
\subsection{The Composition Theorem}
%%%%%%%%%%%%%%%%%%%%%%%%%%%%%%%%%%%%%%%%%%%%%%%%%%%%%%%%%%%%%%%%%%%%%%%%%%%%%%%%%%%%%%%%%%%%%%%%%%%%%%%%%%%%%

Recall that the Decomposition Theorem allows us to decompose a Hoffman colorable graph into induced subgraphs on choices of color classes. For the Composition Theorem, we argue in the other direction. We start with a Hoffman colorable template graph $T$, and extend it with an independent set $V_0$. In the case that we have edges between $T$ and $V_0$ in such a way that the structural requirements imposed by the Decomposition Theorem are satisfied, we would like to know when we get a Hoffman colorable graph. To this end, we introduce the Composition Theorem. In the following, if $x:A \to C$ and $y:B\to C$ are functions, then we write $x \sqcup y$ for the function from $A\sqcup B$ to $C$ applying either $x$ or $y$ accordingly.
\begin{thm}[Composition Theorem]\thlabel{thm:Comp}
Let $G$ be a graph with a $c+1$-coloring $V(G)=\bigsqcup _{i=0}^c V_i$. Write $T=G \setminus V_0$, and $H_i=G[V_0 \cup V_i]$ for $1\le i\le c$. Suppose the following hold.
    \begin{itemize}
        \item $T$ is Hoffman colorable with $c$ colors, with positive eigenvector $x$.
        \item For every $1\le i \le c$ we have $\y_{\min}(T)=\y_{\min}(H_i)$.
        \item There exists a vector $y:V_0 \to \R_{>0}$ such that for every $1\le i \le c$ we have that $ x |_{V_i} \sqcup y$ is a positive eigenvector of $H_i$.
    \end{itemize}
   Then $x \sqcup y$ is a positive eigenvector of $G$ for the eigenvalue $c\nu$. Furthermore, the following are equivalent.
    \begin{enumerate}[label=(\roman*)]
        \item $G$ is Hoffman colorable with $c+1$ colors;
        \item $\y_{\min}(G)=\y_{\min}(T)$;
        \item There exists no eigenvector $z$ of $G$ for an eigenvalue less than $\y_{\min}(T)$ such that $z|_{V_0}$ is orthogonal to $y$ and $z|_{V_i}$ is orthogonal to $x|_{V_i}$ for all $1\le i \le c$.
    \end{enumerate}
\end{thm}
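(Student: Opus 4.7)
My plan is to reduce everything to a single spectral-splitting argument based on the weight-characteristic matrix of the $(c+1)$-part partition of $G$. Setting $\nu := -\y_{\min}(T)$ and $\alpha_i := (x\sqcup y)|_{V_i}$, extended by zero to all of $V(G)$, the idea is to show that the subspace $W := \mathrm{span}(\alpha_0,\alpha_1,\dots,\alpha_c)$ is $A$-invariant and that $A|_W$ has spectrum $\{c\nu,(-\nu)^{(c)}\}$. Once this is in place, the spectrum of $G$ decomposes as $\sigma(A) = \{c\nu,(-\nu)^{(c)}\}\cup \sigma(A|_{W^\perp})$, and all three items reduce to easily-stated conditions on $A|_{W^\perp}$.

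The first step is to verify that $x\sqcup y$ is an eigenvector of $A$ with eigenvalue $c\nu$. By \thref{lem:cwr}(ii) applied to $T$, all weight-intersection numbers of the $c$-coloring $V_1,\dots,V_c$ of $T$ equal $\nu$, giving $\y_{\max}(T)=(c-1)\nu$. For $v\in V_j$ with $j\ge 1$, the $G$-neighborhood of $v$ is the disjoint union of its $T$-neighborhood and its $V_0$-neighborhood, contributing $(c-1)\nu\,x(v)$ and $\nu\,x(v)$ respectively, via the eigenvector equations in $T$ and in $H_j$; for $v\in V_0$, summing the eigenvector equations in each $H_i$ over $i=1,\dots,c$ gives $c\nu\,y(v)$. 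Positivity of $x\sqcup y$ then forces $c\nu=\y_{\max}(G)$.

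Next I would set up the weight-characteristic matrix $S$ whose columns are $\alpha_0/\|\alpha_0\|,\dots,\alpha_c/\|\alpha_c\|$. For these columns to be orthonormal I need $\|x|_{V_i}\|=\|y\|$ for every $i$; this follows by applying \thref{lem:cwr}(iii) to each bipartite graph $H_i$, noting that any bipartite graph with a positive eigenvector is Hoffman colorable with two colors. With that in place, a direct calculation shows $S^{T}AS=\nu(J-I)=:B$, combining the weight-intersection numbers $\nu$ inside $T$ with the weight-intersection numbers $\nu$ between $V_0$ and $V_i$ inside each $H_i$. The same calculation, read entrywise, yields $AS=SB$, which gives $A$-invariance of $W$ and, by symmetry of $A$, of $W^\perp$; hence $\sigma(A|_W)=\sigma(B)=\{c\nu,(-\nu)^{(c)}\}$.

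The equivalences now follow quickly from the spectral split. For $(\mathrm{i})\Leftrightarrow(\mathrm{ii})$: Hoffman colorability with $c+1$ colors means $h(G)=\chi(G)=c+1$; since $\y_{\max}(G)=c\nu$, the identity $h(G)=c+1$ is equivalent to $\y_{\min}(G)=-\nu$, and the chromatic equality is then automatic from $h(G)\le\chi(G)\le c+1$. For $(\mathrm{ii})\Leftrightarrow(\mathrm{iii})$: the split gives $\y_{\min}(G)=\min(-\nu,\y_{\min}(A|_{W^\perp}))$, so $\y_{\min}(G)=-\nu$ iff $A|_{W^\perp}$ has no eigenvalue strictly below $-\nu$; and the orthogonality conditions in (iii) are precisely those defining $W^\perp$. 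The main obstacle is the bookkeeping in the weight-equitability step: one must check that the weight-equitability information coming from $T$ and from each $H_i$ glues consistently into weight-equitability for the full graph $G$, which in turn hinges on the simultaneous norm-equalities $\|x|_{V_i}\|=\|y\|$ and on the fact that every off-diagonal entry of the quotient matrix equals the single number $\nu$.
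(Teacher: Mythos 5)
Your proof is correct and is essentially the paper's argument in quotient-matrix clothing: your invariant subspace $W=\mathrm{span}(\alpha_0,\dots,\alpha_c)$ is exactly the span of the paper's explicit eigenvectors $a=x\sqcup y$ (eigenvalue $c\nu$) and $b_i=x|_{V_i}\sqcup(-y)\sqcup\underline{0}$ (eigenvalue $-\nu$), and your entrywise verification of $AS=SB$ amounts to the same local eigenvector equations the paper checks vertex by vertex. The only cosmetic slip is that the columns of $S$ are orthonormal automatically (they have disjoint supports and are normalized); the norm equalities $\|x|_{V_i}\|=\|y\|$, which you correctly obtain from \thref{lem:cwr}(iii) applied to the bipartite graphs $H_i$, are instead what is needed to make $S^TAS$ equal $\nu(J-I)$ on the nose and to get $AS=SB$.
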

\begin{proof}
   We first show that $a\coloneqq x \sqcup y$ is an eigenvector of $G$. We write $N_i(v)=N(v)\cap V_i$, and we write $\nu=-\y_{\min}(T)$. Note that the eigenvalue for the eigenvector $x$ of $T$ is $(c-1)\nu$, and the eigenvalue for the eigenvector $x|_{V_i} \sqcup y$ of $H_i$ is $\nu$. If $u\in V_0$, then
    \begin{align*}
        \sum_{v\in N(u)} a(v) &=\sum_{i=1}^c \sum_{v\in N_i(u)} a(v),
        \intertext{and since $x |_{V_i} \sqcup y$ is an eigenvector of $H_i$, we have}
        &= \sum_{i=1}^c \nu a(u)=c\nu a(u).
        \intertext{Now if $u\in V_i$ with $1\le i \le c$, we have}
        \sum_{v\in N(u)} a(v) &= \sum_{v\in N_0(u)} a(v) + \sum_{v\in N_T(u)} x(v),
        \intertext{where we use that $x|_{V_i} \sqcup y$ and $x$ are eigenvectors to get}
        &= \nu a(u) + (c-1)\nu a(u)=c \nu a(u).
    \end{align*}
    It follows that $a$ is an eigenvector of $G$ for the eigenvalue $c\nu$. Note that $a$ is positive, so that $c\nu$ is the largest eigenvalue of $G$.

   With similar reasoning (also applying the Decomposition Theorem on bipartite parts of $T$), for $1\le i \le c$ the vector $b_i \coloneqq x|_{V_i} \sqcup -y \sqcup \underline{0}$, where $\underline{0}$ is the zero vector on $V(T) \setminus V_i$, is an eigenvector of $G$ for the eigenvalue $-\nu$.

    So we have $\y_{\min}(G) \le -\nu$. Hence $h(G)\le c+1$. Now the $(c+1)$-coloring of the statement is a Hoffman coloring if and only if $h(G) = c+1$, if and only if $\y_{\min}(G)=-\nu$. Since the vectors $a$ and $b_i$ have an eigenvalue at least $-\nu$, it is therefore sufficient to only consider eigenvectors orthogonal to the space generated by $a$ and $b_i$. This space is also generated by the vectors $y \sqcup \underline{0}$ and $x|_{V_i} \sqcup \underline{0}$. This concludes the proof.
\end{proof}

%%%%%%%%%%%%%%%%%%%%%%%%%%%%%%%%%%%%%%%%%%%%%%%%%%%%%%%%%%%%%%%%%%%%%%%%%%%%%%%%%%%%%%%%%%%%%%%%%%%%%%%%%%%%%
\subsection{Regular Hoffman colorable graphs}\label{sec:cons:regular}
%%%%%%%%%%%%%%%%%%%%%%%%%%%%%%%%%%%%%%%%%%%%%%%%%%%%%%%%%%%%%%%%%%%%%%%%%%%%%%%%%%%%%%%%%%%%%%%%%%%%%%%%%%%%%
In this section, we give a sufficient condition for a regular graph to be Hoffman colorable. We know that such a graph has to satisfy \thref{prop:constantequitable}. In order to speak more easily about it, we say that a coloring of a regular graph $G$ is \emph{$\nu$-equitable} if every vertex is adjacent to precisely $\nu$ vertices of every color other than its own. If a coloring is $\nu$-equitable, then necessarily all its color classes are of the same size. Note that \thref{prop:constantequitable} now says that every Hoffman coloring of a regular graph $G$ is $(-\y_{\min}(G))$-equitable.

In \thref{prop:colorcomplementregular} we show that if a regular graph $G$ has a $\nu$-equitable coloring and $\nu$ is big enough, then $G$ is Hoffman colorable. In order to prove that, we need the following notion. Given a coloring of a graph $G$, we define the \emph{color complement} $\ol G_{\text{color}}$ to be the graph on the same vertex set, such that vertices are adjacent in $\ol G_{\text{color}}$ whenever they were not adjacent in $G$, and belong to different color classes. The coloring of $G$ used for the color complement is also a valid coloring for the color complement, by construction. If we use a $\nu$-equitable coloring of $G$, the coloring is $(m-\nu)$-equitable for $\ol G_{\text{color}}$, where $m$ is the size of the color classes.

As an example (see Figure \ref{fig:colorcomplement}), take $G$ to be the 6-cycle with a 1-equitable 3-coloring. Then the color complement is the disjoint union of two complete graphs of size three, optimally colored by a 2-equitable 3-coloring. Note that a different coloring of the 6-cycle would give a different color complement.
    
    \begin{figure}[ht]
    \begin{center}
        \begin{tikzpicture}[scale=0.4]
            \coordinate (1) at (2,0);
            \coordinate (2) at (1,1.7);
            \coordinate (3) at (-1,1.7);
            \coordinate (4) at (-2,0);
            \coordinate (5) at (-1,-1.7);
            \coordinate (6) at (1,-1.7);
            \draw[gray, thick] (1) -- (2) -- (3) -- (4) -- (5) -- (6) -- (1);
            \filldraw[blue] (1) circle (4pt);
            \filldraw[red] (2) circle (4pt);
            \filldraw[green] (3) circle (4pt);
            \filldraw[blue] (4) circle (4pt);
            \filldraw[red] (5) circle (4pt);
            \filldraw[green] (6) circle (4pt);
            \coordinate (1a) at (8,0);
            \coordinate (2a) at (7,1.7);
            \coordinate (3a) at (5,1.7);
            \coordinate (4a) at (4,0);
            \coordinate (5a) at (5,-1.7);
            \coordinate (6a) at (7,-1.7);
            \draw[gray, thick] (1a) -- (3a) -- (5a) -- (1a);
            \draw[gray, thick] (2a) -- (4a) -- (6a) -- (2a);
            \filldraw[blue] (1a) circle (4pt);
            \filldraw[red] (2a) circle (4pt);
            \filldraw[green] (3a) circle (4pt);
            \filldraw[blue] (4a) circle (4pt);
            \filldraw[red] (5a) circle (4pt);
            \filldraw[green] (6a) circle (4pt);
        \end{tikzpicture}
    \end{center}
    \caption{A pair of color complemented graphs.}\label{fig:colorcomplement}
    \end{figure}
\begin{prop}\thlabel{prop:colorcomplementregular}
    Let $G$ be a graph with a $\nu$-equitable $c$-coloring with every color class of size $m$. If $\nu \ge m (c-1)/c$, then $G$ is Hoffman colorable and $\chi(G)=c$.
\end{prop}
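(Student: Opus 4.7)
The plan is to compute $\y_{\max}(G)$ and $\y_{\min}(G)$ exactly, show they equal $\nu(c-1)$ and $-\nu$ respectively, and conclude $h(G) = c$. Since the given $c$-coloring certifies $\chi(G) \le c$ and Hoffman's bound forces $\chi(G) \ge h(G)$, Hoffman colorability together with $\chi(G) = c$ will then follow immediately.

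Because the $\nu$-equitable coloring is an equitable partition with all off-diagonal intersection numbers equal to $\nu$, the graph $G$ is regular of valency $\nu(c-1)$ and the color complement $\ol G_{\text{color}}$ is regular of valency $(m-\nu)(c-1)$, with the same equitable color partition. Consequently, both $A(G)$ and $A(\ol G_{\text{color}})$ preserve the decomposition $\R^{V(G)} = U^\perp \oplus U$, where $U^\perp$ is the $c$-dimensional subspace of vectors constant on each color class and $U$ its orthogonal complement. On $U^\perp$, $A(G)$ acts as the quotient matrix $\nu(J_c - I_c)$, contributing the eigenvalues $\nu(c-1)$ (once) and $-\nu$ (with multiplicity $c-1$).

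The heart of the proof is the restriction to $U$. Using $A(G) + A(\ol G_{\text{color}}) = A(K_{c \times m})$ together with the observation that $A(K_{c \times m})$ annihilates $U$ (for $z \in U$ and any $v$ in color class $V_j$, $(A(K_{c \times m})z)(v) = \sum_{u \notin V_j} z(u) = 0$), we obtain $A(G)|_U = -A(\ol G_{\text{color}})|_U$. Hence every eigenvalue of $A(G)|_U$ is the negative of an eigenvalue of $A(\ol G_{\text{color}})|_U$; by regularity these lie in $[-(m-\nu)(c-1),(m-\nu)(c-1)]$, and the hypothesis $\nu \ge m(c-1)/c$ rearranges exactly to $(m-\nu)(c-1) \le \nu$. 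Combining the $U^\perp$ and $U$ contributions gives $\y_{\max}(G) = \nu(c-1)$ and $\y_{\min}(G) = -\nu$, whence $h(G) = c$.

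The only subtle point is that algebraic rearrangement, which is the precise reason the stated threshold on $\nu$ appears, so no serious obstacle arises. Alternatively, one could organize the same argument by induction on $c$ through the Composition Theorem, peeling off $V_0$ and taking $x \equiv \mathbf{1}$ on $T$ and $y \equiv \mathbf{1}$ on $V_0$: the first two hypotheses of \thref{thm:Comp} follow from each bipartite part $H_i$ being $\nu$-regular, and condition (iii) reduces again to the same color-complement computation ruling out eigenvectors of $A(G)$ supported in $U$ with eigenvalue less than $-\nu$.
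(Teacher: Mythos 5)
Your proof is correct, and it takes a genuinely different (though related) route from the paper. The paper proves the proposition by induction on $c$ via the Composition Theorem: it peels off one color class, invokes the induction hypothesis on $T$, and then disposes of condition (iii) of \thref{thm:Comp} by observing that the offending eigenvector would sum to zero on each color class, hence be an eigenvector of the color complement with negated eigenvalue, which is bounded by the color complement's valency. You instead give a direct, non-inductive argument: since the $\nu$-equitable partition is equitable for both $G$ and $\ol G_{\text{color}}$, both adjacency matrices preserve the splitting $\R^{V(G)}=U^\perp\oplus U$; on $U^\perp$ you read off the quotient-matrix eigenvalues $\nu(c-1)$ and $-\nu$, and on $U$ you use the identity $A(G)+A(\ol G_{\text{color}})=A(K_{c\times m})$ together with $A(K_{c\times m})|_U=0$ to transfer the valency bound from the color complement. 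The essential trick --- negating eigenvalues via the color complement on vectors summing to zero over each class, with the hypothesis $\nu\ge m(c-1)/c$ rearranging to $(m-\nu)(c-1)\le\nu$ --- is the same in both arguments, but your organization is self-contained (no Composition Theorem, no induction) and yields strictly more: the full block structure of the spectrum, with $\y_{\max}(G)=\nu(c-1)$ and $\y_{\min}(G)=-\nu$ exhibited explicitly, whereas the paper's version showcases the Composition Theorem machinery. Your closing remark correctly identifies that the inductive route reduces to the same computation; the only cosmetic point worth making explicit is that $\y_{\min}(G)=-\nu$ requires $c\ge 2$ so that the eigenvalue $-\nu$ of the quotient matrix actually occurs, which is harmless since $c=1$ forces $G$ to be edgeless.
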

\begin{proof}
    We argue by induction on $c$. The base case $c=2$ is trivial as every bipartite graph is Hoffman colorable. Suppose the statement holds for $c$, and suppose $G$ has a $\nu$-equitable $(c+1)$-coloring $V(G)=\bigsqcup _{i=0}^c V_i$, and that $\nu \ge m c/(c+1)$. We apply the Composition Theorem, where $x$ and $y$ are constant vectors. By the induction hypothesis, $T$ is Hoffman colorable, and $\y_{\min}(T)=-\nu=\y_{\min}(H_i)$, so that the requirements of the Composition Theorem are met.
    
    Let $z$ be an eigenvector of $G$ with eigenvalue $\y$ as in the Composition Theorem, then $z$ sums to zero on every color class. Note that this implies that $z$ is also an eigenvector of the color complement of $G$, with eigenvalue $-\y$. Note that the color complement of $G$ is of valency $c(m-\nu)$, so that $-\y \le c(m-\nu)$. By $\nu \ge mc/(c+1)$ we get $c(m-\nu)\le \nu$. Hence, $-\y\le \nu$ and the result follows by the Composition Theorem.
\end{proof}
For example, for 3-colorable graphs, if $\nu$ is at least two thirds of the color class size, then the graph is Hoffman colorable. Furthermore, note that if $c=m=\nu+1$, then the requirement of \thref{prop:colorcomplementregular} is automatically satisfied. A graph with such a coloring can be obtained by removing a perfect matching from every bipartite part of a regular complete multipartite graph. This way, we get exponentially many non-isomorphic regular Hoffman colorable graphs.

%%%%%%%%%%%%%%%%%%%%%%%%%%%%%%%%%%%%%%%%%%%%%%%%%%%%%%%%%%%%%%%%%%%%%%%%%%%%%%%%%%%%%%%%%%%%%%%%%%%%%%%%%%%%%
\section{The algorithm}\label{sec:algorithm}
%%%%%%%%%%%%%%%%%%%%%%%%%%%%%%%%%%%%%%%%%%%%%%%%%%%%%%%%%%%%%%%%%%%%%%%%%%%%%%%%%%%%%%%%%%%%%%%%%%%%%%%%%%%%%

In this section we propose an algorithm for computing every connected Hoffman colorable graph given a number of vertices and a number of colors. The algorithm is generative in nature and is based on the Decomposition and Composition Theorems. Let $H_{i,j}$ be the bipartite part on color classes $i$ and $j$. Then if $i,j,k$ are three distinct color classes, by the Decomposition Theorem we have $\y_{\max}(H_{i,j})=\y_{\max}(H_{i,k})$ and furthermore, $H_{i,j}$ and $H_{i,k}$ each have a positive eigenvector, and they agree on the $i$'th color class. In this way, the bipartite graphs are \emph{compatible}. We call the collection of all $\binom{c}{2}$ bipartite parts of a $c$-coloring a \emph{collection of compatible bipartite parts}.

The general idea of the algorithm is to generate bipartite graphs (\cite{geng}) and apply the Composition Theorem inductively. To check compatibility, we compute the largest eigenvalue and a corresponding positive eigenvector for the bipartite graphs. See \ref{algo} for a pseudocode outline of the algorithm. We use the Composition Theorem to check if the obtained graph is indeed Hoffman colorable. The Decomposition Theorem ensures that every connected Hoffman colorable graph is found by the algorithm.

\begin{algorithm}[ht]
    \caption{Algorithm for computing all connected Hoffman colorable graphs}
    \label{algo}
    \KwIn{A number of vertices $n$ and a number of colors $\chi$;}
    \KwOut{A sequence \emph{gr} of connected Hoffman colorable graphs, and a sequence \emph{disc} of disconnected cases;}
    \nlset{1a}Form all viable integer partitions of $n$ into $\chi$ parts\;
    \nlset{1b}Generate all bipartite parts of relevant sizes and sort by largest eigenvalue\;
    \For{eigenvalue}{
    \nlset{2a}Eliminate integer partitions for which there exists a pair with no possible bipartite parts\;
    \nlset{2b}Filter out the disconnected bipartite parts\;
    \For{connected bipartite part}{
    Compute Perron eigenvector\;
    Sort graph by Perron eigenvector\;
    }
    \For{integer partition}{
    \nlset{3a}\If{one possible bipartite part is disconnected}{
    Append this integer partition and all relevant info to \emph{disc}\;}\
    \nlset{3b}Form every possible collection of compatible bipartite parts out of connected bipartite parts\;
    \For{collection of compatible bipartite parts}{
    \nlset{4}Compose in every possible way\;
    \If{composed graph is Hoffman colorable}{Append to \emph{gr}\;}}}}
    \KwRet{ \emph{gr}, \emph{disc}.}
\end{algorithm}

Note that disconnected bipartite parts pose a problem, as the dimension of the eigenspace for the largest eigenvalues might be greater than one; in other words, there is not a unique positive eigenvector. The algorithm automatically isolates these cases for human intervention to take place. In the smallest cases we were able to solve these by hand. 

The algorithm has been implemented in the computer algebra system Magma (\cite{magma}), version V.28-8. The code can be found via GitHub \cite{github}.

%%%%%%%%%%%%%%%%%%%%%%%%%%%%%%%%%%%%%%%%%%%%%%%%%%%%%%%%%%%%%%%%%%%%%%%%%%%%%%%%%%%%%%%%%%%%%%%%%%%%%%%%%%%%%
\subsection{Results}

We provide four tables to present the results of applying the algorithm to three (Table \ref{tab:three}), four (Table \ref{tab:four}), five (Table \ref{tab:five}) and six colors (Table \ref{tab:six}). For every choice of input of a number of colors and a number of vertices, we provide the total number of connected Hoffman colorable graphs (up to isomorphism), the number of regular ones and irregular ones, and the number of graphs where the Hoffman bound outperforms (\ref{classicbound}) rounded up.

Some cells in the tables have a $\ge$-sign, caused by disconnected bipartite parts as described above. In other cases we can be sure that we have found every connected Hoffman colorable with the given number of vertices and colors.

\begin{table}
\begin{center}
\begin{tabular}{|c||c|c|c|c|}
    \hline
        \#vertices& \#graphs & \#regulars  &\#irregulars &\#outperforming \\
         &  & &  & (\ref{classicbound}) \\
        \hline
        \hline
        3 & 1 & 1 & 0 & 0 \\
        \hline
        4 & 0 & - & - & -  \\
        \hline
        5 & 0 & - & - & -  \\
        \hline
        6 & 2 & 1 & 1 & 1 \\
        \hline
        7 & 0 & - & - & - \\
        \hline
        8 & 0 & - & - & - \\
        \hline
        9 & 13 & 4 & 9 & 3  \\
        \hline
        10 & 3 & - & 3 & 0  \\
        \hline
        11 & 2 & - & 2 & 0  \\
        \hline
        12 & 68 & 16 & 52 & 20 \\
        \hline
        13 & 14 & - & 14 & 3 \\
        \hline
        14 & 46 & - & 46 & 10 \\
        \hline
        15 & $\ge 1634$ & $\ge 900$ & $\ge 734$ & $\ge 31$ \\
        \hline
    \end{tabular}
\end{center}
\caption{Connected Hoffman colorable graphs with three colors.}
\label{tab:three}
\end{table}

\begin{table}
\begin{center}
    \begin{tabular}{|c||c|c|c|c|}
    \hline
        \#vertices& \#graphs & \#regulars  &\#irregulars &\#outperforming \\
         &  & &  & (\ref{classicbound}) \\
        \hline
        \hline
        4 & 1 & 1 & 0 & 0 \\
        \hline
        5 & 0 & - & - & - \\
        \hline
        6 & 0 & - & - & - \\
        \hline
        7 & 0 & - & - & - \\
        \hline
        8 & 1 & 1 & 0 & 0 \\
        \hline
        9 & 0 & - & - & - \\
        \hline
        10 & 0 & - & - & -\\
        \hline
        11 & 2 & - & 2 & 2\\
        \hline
        12 & 8 & 5 & 3 & 1 \\
        \hline
        13 & 17 & - & 17 & 0 \\
        \hline
        14 & 5 & - & 5 & 5 \\
        \hline
        15 & 10 & - & 10 & 10 \\
        \hline
        16 & $\ge 167$ & $\ge 92$ & $\ge 75$ & $\ge 12$ \\
        \hline
        17 & 8 & - & 8 & 8 \\
        \hline
        18 & $\ge 380$ & - & $\ge 380$ & $\ge 360$ \\
        \hline
    \end{tabular}
\end{center}
\caption{Connected Hoffman colorable graphs with four colors.}
\label{tab:four}
\end{table}

\begin{table}
\begin{center}
    \begin{tabular}{|c||c|c|c|c|}
    \hline
        \#vertices& \#graphs & \#regulars  &\#irregulars &\#outperforming \\
         &  & &  & (\ref{classicbound}) \\
        \hline
        \hline
        5 & 1 & 1 & 0 & 0\\
        \hline
        6 & 0 & - & - & - \\
        \hline
        7 & 0 & - & - & - \\
        \hline
        8 & 0 & - & - & - \\
        \hline
        9 & 0 & - & - & - \\
        \hline
        10 & 1 & 1 & 0 & 0 \\
        \hline
        11 & 0 & - & - & - \\
        \hline
        12 & 0 & - & - & - \\
        \hline
        13 & 2 & - & 2 & 2 \\
        \hline
        14 & 0 & - & - & - \\
        \hline
        15 & 10 & 7 & 3 & 0 \\
        \hline
        16 & 16 & - & 16 & 16 \\
        \hline
        17 & 34 & - & 34 & 0 \\
        \hline
        18 & $\ge 5$ & - & $\ge 5$ & $\ge 5$ \\
        \hline
        19 & 7 & - & 7 & 7 \\
        \hline
    \end{tabular}
\end{center}
\caption{Connected Hoffman colorable graphs with five colors.}
\label{tab:five}
\end{table}

\begin{table}
\begin{center}
    \begin{tabular}{|c||c|c|c|c|}
    \hline
        \#vertices& \#graphs & \#regulars  &\#irregulars &\#outperforming \\
         &  & &  & (\ref{classicbound}) \\
        \hline
        \hline
        6 & 1 & 1 & 0 & 0 \\
        \hline
        7 & 0 & - & - & - \\
        \hline
        8 & 0 & - & - & - \\
        \hline
        9 & 0 & - & - & - \\
        \hline
        10 & 0 & - & - & - \\
        \hline
        11 & 0 & - & - & - \\
        \hline
        12 & 1 & 1 & 0 & 0 \\
        \hline
        13 & 0 & - & - & - \\
        \hline
        14 & 0 & - & - & - \\
        \hline
        15 & 1 & - & 1 & 1 \\
        \hline
        16 & 0 & - & - & - \\
        \hline
        17 & 0 & - & - & - \\
        \hline
        18 & 10 & 5 & 5 & 3 \\
        \hline
        19 & 8 & - & 8 & 8 \\
        \hline
        20 & 0 & - & - & - \\
        \hline
        21 & & & &  \\
        \hline
        22 & 5 & - & 5 & 5 \\
        \hline
        23 & 17 & - & 17 & 17 \\
        \hline
    \end{tabular}
\end{center}
\caption{Connected Hoffman colorable graphs with six colors.}
\label{tab:six}
\end{table}

\newpage

%%%%%%%%%%%%%%%%%%%%%%%%%%%%%%%%%%%%%%%%%%%%%%%%%%%%%%%%%%%%%%%%%%%%%%%%%%%%%%%%%%%%%%%%%%%%%%%%%%%%%%%%%%%%%
\subsection*{Acknowledgements} 
%%%%%%%%%%%%%%%%%%%%%%%%%%%%%%%%%%%%%%%%%%%%%%%%%%%%%%
Aida Abiad is supported by NWO (Dutch Research Council) through the grants VI.Vidi.213.085 and OCENW.KLEIN.475. We are grateful to the anonymous referee for the careful reading and comments, which improved the presentation of the paper.

%%%%%%%%%%%%%%%%%%%%%%%%%%%%%%%%%%%%%%%%%%%%%%%%%%%%%%

\end{document}